\documentclass[11pt,a4paper]{amsart}
\usepackage[utf8]{inputenc}
\usepackage{amsmath}
\usepackage{amsfonts}
\usepackage{amscd}
\usepackage{enumerate}
\usepackage[margin= 0.95in]{geometry}
\usepackage{amsthm}
\usepackage{mathrsfs}
\usepackage{amssymb}
\usepackage[all]{xy}
\usepackage{xcolor}
\usepackage{graphics}
\usepackage{lscape}
\usepackage{array}
\usepackage{ulem}
\usepackage{setspace}
\spacing{1.20}
\raggedbottom

%\usepackage{framed}
%\usepackage{mathtools}
%\usepackage{hyperref}
%\usepackage{mathabx}
%!TEX encoding = UTF-8 Unicode
%\usepackage{refcheck}
%\usepackage{soul}
\usepackage{xy}

%------------- JOAO's PACKAGES
\usepackage{amssymb} %More math symbols
\usepackage{amsmath} %Enhances display of math stuff
\usepackage{amsthm} %Theorems, lemmas, etc
\usepackage{mathrsfs} %Calligraphic letters
\usepackage{mathtools} %Enhances display of math stuff
\usepackage{tikz-cd} %Commutative diagrams
\usetikzlibrary{graphs,decorations.pathmorphing,decorations.markings}
\usepackage{stmaryrd} %mapsfrom arrow
\usepackage{upgreek} %To use upkappa
\usepackage{centernot} %To negate \impliedby
\include{commands}
\usepackage{comment}
\usepackage[shortlabels]{enumitem}
%--------------

%\renewcommand{\st}[1]{\ignorespaces}

\numberwithin{equation}{section}
\newtheorem*{theorem*}{Theorem}
\newtheorem*{theorem_A}{Theorem A}
\newtheorem*{theorem_B}{Theorem B}
\newtheorem{theorem}{Theorem}[section]
\newtheorem{lemma}[theorem]{Lemma}
\newtheorem{proposition}[theorem]{Proposition}
\newtheorem{corollary}[theorem]{Corollary}
\theoremstyle{definition}

\theoremstyle{remark}
\newtheorem{remark}[theorem]{Remark}

\usepackage{hyperref}\hypersetup{colorlinks}

%\usepackage{pdfsync}

% Colors:
\usepackage{color} %\textcolor{red}{Test}

\definecolor{darkred}{rgb}{1,0,0} %can change the intensity in [0,1]
\definecolor{darkgreen}{rgb}{0,1,0}
\definecolor{darkblue}{rgb}{0,0,1}

\hypersetup{colorlinks,
linkcolor=darkblue,
filecolor=darkgreen,
urlcolor=darkred,
citecolor=darkgreen}

\DeclareMathOperator{\Hom}{Hom}

\DeclareMathOperator{\Nm}{Nm}
\DeclareMathOperator{\Jac}{J}
\DeclareMathOperator{\Prym}{P}
\DeclareMathOperator{\id}{1}

\DeclareMathOperator{\Pic}{Pic}
\DeclareMathOperator{\tot}{tot}
\DeclareMathOperator{\Coh}{Coh}
\DeclareMathOperator{\Irr}{Irr}

\DeclareMathOperator{\Hilb}{Hilb}

\DeclareMathOperator{\Higgs}{\mathfrak{Higgs}}
\DeclareMathOperator{\GL}{GL}
\DeclareMathOperator{\SL}{SL}
\DeclareMathOperator{\PGL}{PGL}

\newcommand{\Aa}{\mathcal{A}}
\newcommand{\Bb}{\mathcal{B}}

\newcommand{\Dd}{\mathcal{D}}
\newcommand{\Ee}{\mathcal{E}}
\newcommand{\Ff}{\mathcal{F}}
\newcommand{\Gg}{\mathcal{G}}
\newcommand{\Ii}{\mathcal{I}}
\newcommand{\Hh}{\mathcal{H}}

\newcommand{\Kk}{\mathcal{K}}

\newcommand{\Mm}{\mathcal{M}}
\newcommand{\Nn}{\mathcal{N}}
\newcommand{\Oo}{\mathcal{O}}
\newcommand{\Pp}{\mathcal{P}}
\newcommand{\Qq}{\mathcal{Q}}
\newcommand{\Rr}{\mathcal{R}}

\newcommand{\Tt}{\mathcal{T}}
\newcommand{\Uu}{\mathcal{U}}
\newcommand{\Vv}{\mathcal{V}}
\newcommand{\Ww}{\mathcal{W}}

\newcommand{\Hhom}{\mathcal{H}\!om}

\newcommand{\p}{\mathrm{p}}
\newcommand{\q}{\mathrm{q}}

\newcommand{\BBB}{\mathrm{BBB}}
\newcommand{\BAA}{\mathrm{BAA}}
%\newcommand{\Mmm}{\mathfrak{M}}
%\newcommand{\Nnn}{\mathfrak{N}}

%%%%Calligraphic functions
\newcommand{\ol}[1]{\overline{#1}}

\newcommand{\wt}[1]{\widetilde{#1}}

\newcommand{\ZZ}{\mathbb{Z}}

\newcommand{\QQ}{\mathbb{Q}}

\newcommand{\into}{\hookrightarrow}

\newcommand{\adjointpair}[2]{{#1}\dashv {#2}}

\newcommand{\quotient}[2]{{\raisebox{.2em}{\thinspace $#1$}\left / \raisebox{-.15em}{ $#2$}\right.}}

\newcommand\Quotient[2]{
\mathchoice
{% \displaystyle
\text{\raise1ex\hbox{\thinspace $#1$}\Big{/} \lower1ex\hbox{$#2$} \thinspace}%
}
{% \textstyle
#1\,/\,#2
}
{% \scriptstyle
#1\,/\,#2
}
{% \scriptscriptstyle  
#1\,/\,#2
}
}

\newcommand\GIT[2]{
\mathchoice
{% \displaystyle
\text{\raise1ex\hbox{\thinspace $#1$}\Big{/}\!\!\!\!\Big{/} \lower1ex\hbox{$#2$} \thinspace}%
}
{% \textstyle
#1\,/\,#2
}
{% \scriptstyle
#1\,/\,#2
}
{% \scriptscriptstyle  
#1\,/\,#2
a       }
}

\newcommand{\map}[5]{\begin{array}{ccc}   #1  & \stackrel{#5}{\longrightarrow} &  #2  \\  #3 & \longmapsto & #4  \end{array}}

\newcommand{\morph}[6]{\begin{array}{cccc} #6: & #1  & \stackrel{#5}{\longrightarrow} &  #2  \\ & #3 & \longmapsto & #4  \end{array}}

\title[Fourier--Mukai for compactified Pryms]{\bf Fourier--Mukai transform for fine compactified Prym varieties}

\author[E. Franco]{Emilio Franco}
\address{E. Franco,
\newline\indent Universidad Aut\'onoma de Madrid 
\newline\indent and
\newline\indent Instituto de Ciencias Matem\'aticas (CSIC--UAM--UCM--UC3M)
\newline\indent Campus de Cantoblanco 28049, Madrid, Espa\~na.}
\email{emilio.franco@uam.es}

\author[R. Hanson]{Robert Hanson}
\address{Robert Hanson, \newline\indent Centro de An\'alise Matem\'atica, Geometria e Sistemas Din\^{a}micos, 
\newline\indent Instituto Superior T\'ecnico, Universidade de Lisboa, 
\newline\indent Av. Rovisco Pais s/n, 1049-001 Lisboa, Portugal}
\email{robert.hanson@tecnico.ulisboa.pt}

\author[J. Ruano]{Jo\~ao Ruano}
\address{Jo\~ao Ruano, \newline\indent Centro de An\'alise Matem\'atica, Geometria e Sistemas Din\^{a}micos, 
\newline\indent Instituto Superior T\'ecnico, Universidade de Lisboa, 
\newline\indent Av. Rovisco Pais s/n, 1049-001 Lisboa, Portugal}
\email{joao.ruano@tecnico.ulisboa.pt}

\date{\today}

\thanks{
First author partially supported by the Spanish Ministry of Science and Innovation, through the ‘Severo Ochoa Programme for Centres of Excellence in R$\&$D’ (CEX2019-000904-S), and through project PGC2022-1001150218, and by the Portuguese FCT through CEECIND/04153/2017. Second author supported by La Caixa INPhINIT programme with fellowship number 1801P.01033.
}
\begin{document}

\begin{abstract}
Consider a finite flat morphism $\beta : C \to X$ between a reduced, projective, locally planar curve $C$ and a smooth projective curve $X$. Associated to two generic polarizations $q$ and $q'$ on $C$, one can construct the corresponding compactified Prym varieties $\ol{\Prym}_\beta(q)$ and $\ol{\Prym}_\beta(q')$. With $n$ the rank of $\beta$, the finite group $\Gamma = \Jac_X[n]$ of $n-$torsion line bundles acts on $\ol{\Prym}_\beta(q')$ by tensorisation. In this article we construct a Fourier--Mukai transform between the derived category of $\ol{\Prym}_\beta(q)$ and the $\Gamma$-equivariant derived category of $\ol{\Prym}_\beta(q')$, providing a derived equivalence between the $\SL_n$-Hitchin fibre and the dual $\PGL_n$-Hitchin fibre for a dense class of singular spectral curves. This is an extension of Fourier--Mukai transforms on compactified Jacobian varieties constructed by Arinkin and Melo--Rapagnetta--Viviani, which correspond to autoduality of $\GL_n$-Hitchin fibres.
\end{abstract}

\maketitle

\tableofcontents

\section{Introduction}

\subsection{Background}
The seminal Fourier--Mukai transform of Mukai \cite{mukai} defines a derived equivalence between an abelian variety $A$ and its dual $A^{\vee} = \Pic^0(A)$. Using the Poincar\'e bundle $\Pp$ on $A \times A^{\vee}$, the universal family for $\Pic^0(A)$, and the canonical projections $A \xleftarrow{p_1} A \times A^{\vee} \xrightarrow{p_2} A^{\vee}$, the derived equivalence is 
\begin{equation} \label{eq mukai}
\Phi^{\Pp}_{A \to A^\vee}: D^b(A) \longrightarrow D^b(A^{\vee}), \quad \Ff^\bullet \mapsto Rp_{2,*}(p_1^{*}\Ff^\bullet \otimes^L \Pp).
\end{equation}
On a smooth irreducible projective curve $C$ over an algebraically closed field, the Jacobian variety $\Jac_C$ is classically known to be a self-dual abelian variety: $\Jac_C \cong \Jac_C^{\vee}$. Mukai's work therefore provides an autoequivalence on the derived category of $\Jac_C$, 
\begin{equation}
\label{jac autodual}
\Phi^{\Pp}_{\Jac_C \to \Jac_C} : D^b(\Jac_C) \longrightarrow D^b(\Jac_C),
\end{equation}
which interchanges skyscraper sheaves and line bundles on $\Jac_C$. This can be viewed as a derived interpretation of the self-duality of $\Jac_C$. 

When $C$ has singularities $\Jac_C$ is no longer compact. Picking a generic polarization $q$ on $C$, a natural compactification is given by the moduli space $\ol \Jac_C(q)$ of $q$-stable pure dimension $1$ sheaves on $C$. The construction and study of such compactifications has been addressed by many authors, from which we highlight the work of Altman, Arinkin, Casalania-Martin, Esteves, Gagn\'e, Kass, Kleiman, Melo, Oda, Rapagnetta, Seshadri, Simpson and Viviani \cite{altman&kleiman, arinkin1, esteves, esteves&gagne&kleiman, esteves&kleiman, oda&seshadri, simpson,  melo0}. 

It is then natural to ask if the Fourier--Mukai transform \eqref{jac autodual} extends to $\ol \Jac_C(q)$. When $C$ is an integral planar curve, this was answered by Arinkin \cite{arinkin} by constructing a Poincar\'e sheaf - an extension of the Poincar\'e bundle, which defines a Fourier--Mukai transform. This was extended by Melo--Rapagnetta--Viviani \cite{melo1, melo2} to the case where $C$ is planar and reduced, obtaining a new Poincar\'e sheaf $\Pp$ on $\ol \Jac_C(q) \times \ol \Jac_C(q')$ and a Fourier-Mukai transform 
\begin{equation} \label{eq FM on compactified Jac}
\Phi^{\Pp}_{\ol\Jac \to \ol\Jac}: D^b(\ol \Jac_C(q)) \xrightarrow{\cong} D^b(\ol \Jac_C(q')).
\end{equation}
A similar story can be told after replacing Jacobian varieties with their Prym subvarieties. Given a finite $n:1$ morphism $\beta:C \to X$ between smooth curves, the corresponding Prym variety is an abelian subvariety $\Prym_\beta \subset \Jac_C$ defined as the (connected component of the identity of the) kernel of the Norm map $\Nm_\beta := \det \circ \beta_*$. It is well known that the dual abelian variety $\Prym_\beta^{\vee} = \Pic^0(\Prym_\beta^{\vee})$ is isomorphic to a quotient of $\Prym_{\beta}$ by a finite group $\Gamma$ consisting of $n$-torsion line bundles on $X$ (see for instance \cite{hausel&thaddeus}). Once more, there exists a Poincar\'e bundle $\Qq$ over $\Prym_\beta \times \Prym_\beta/\Gamma$ that provides an equivalence of categories
\begin{equation} \label{eq FM for Prym}
\Phi^{\Qq}_{\Prym_\beta \to \Prym_\beta/\Gamma} : D^b(\Prym_\beta) \longrightarrow D^b(\Prym_\beta/\Gamma).
\end{equation}
If we now allow $C$ to be singular, planar and reduced, and $X$ remains smooth, then Hausel and Pauly \cite{hausel&pauly} showed that one can define the Norm map by $\Nm_\beta = \det \circ R\beta_*$. As noted by Carbone \cite{carbone}, the Norm map extends to a morphism defined on the whole compactified Jacobian $\Nm_\beta : \ol\Jac_C(q) \to \Jac_X$ and one can naturally define the compactified Prym variety $\ol\Prym_\beta(q) := \Nm^{-1}(\Oo_X) \subset \ol{\Jac}_C(q)$. Using the aforementioned ideas of Arinkin and Melo--Rapagnetta--Viviani, this paper addresses the extension and modification of \eqref{eq FM for Prym} to the compactification.  

\subsection{Motivation}
\label{motivation}
Our motivation arises from a conjecture on the moduli stack $\Higgs_G$ that parameterises $G$-Higgs bundles over a smooth projective curve $X$. Given a pair of Langlands dual groups $G$ and $^L G$, Donagi--Pantev \cite{donagi&pantev} conjecture that the semi-classical limit of the Geometric Langlands correspondence provides an equivalence of categories 
\begin{equation} \label{geometric langlands}
D^b(\Higgs_G) \longrightarrow D^b(\Higgs_{^LG}).
\end{equation}
%After the passage to the moduli scheme $\Mm_G$ of semistable Higgs bundles, the semi-classical limit
In the context of Strominger--Yau--Zaslow mirror symmetry, this is expected to restrict to a Fourier--Mukai transform on the fibres of dual Hitchin fibrations $\mathfrak{h}_G : \Higgs_{G} \to B_G$ and $\mathfrak{h}_{^LG} : \Higgs_{^LG} \to B_{^LG}$, hence defining an equivalence of categories
%. The conjectured equivalence (\ref{geometric langlands}) is expected to be compatible with the fibration on the moduli space, in the sense that it is expected to restrict to an equivalence of categories 
\begin{equation} \label{fibre-wise geometric langlands}
D^b(\mathfrak{h}_G^{-1}(b)) \cong D^b(\mathfrak{h}_{^LG}^{-1}(b))
\end{equation}
for every $b \in B_G \cong B_{^LG}$. 

After the passage to the moduli scheme $\Mm_G$ of semistable Higgs bundles and the corresponding Hitchin fibration $h_G : \Mm_G \to B_G$, it is natural to ask if \eqref{fibre-wise geometric langlands} descends to a derived equivalence between the fibres $h^{-1}_G(b)$ and $h^{-1}_{^LG}(b)$. This has been obtained over the open locus $B_{reg} \subset B$ of smooth spectral curves - first by Hausel--Thaddeus \cite{hausel&thaddeus} for the Langlands dual pair $\SL_n$ and $\PGL_n$, and then Donagi--Pantev \cite{donagi&pantev} for a general Langlands dual pair $G$ and $^LG$. More specifically, Donagi--Pantev show that $h_G^{-1}(b)$ and $h_{^LG}^{-1}(b)$ are torsors for a pair of dual abelian varieties $A$ and $A^\vee$. Therefore, for $b \in B_{reg}$, a Fourier-Mukai transform 
\begin{equation} \label{fibre-wise geometric langlands in moduli}
D^b(h_G^{-1}(b)) \xrightarrow{\cong} D^b(h_{^LG}^{-1}(b))
\end{equation}
is provided by Mukai's equivalence \eqref{eq mukai}. 

For the Langlands self-dual group $G=\GL_n$, the aforementioned work of Arinkin \cite{arinkin1} initiated the extension of these ideas outside of $B_{reg}$. To a point $b \in B_{GL_n}$, the spectral correspondence \cite{beauville&arnaud&ramanan, schaub} assigns a \textit{spectral cover} $\beta_b : C_b \to X$, and when $C_b$ is reduced, the fibres are the compactified Jacobians ${h_{\GL_n}^{-1}(b) = \ol \Jac_{C_b}(q)}$. The Fourier--Mukai transform of Arinkin (\ref{eq FM on compactified Jac}) therefore implies \eqref{fibre-wise geometric langlands in moduli} for $\GL_n$ inside an intermediate locus $B_{reg} \subset B_{int} \subset B$ consisting of integral spectral curves. When the rank and the degree are coprime, the work of Melo-Rapagnetta-Viviani \cite{melo0, melo1, melo2} extends Arinkin's result to a larger locus $B_{red} \subset B$ consisting of reduced spectral curves. 

Our work has been driven by the natural challenge of extending the previous results to the framework associated to the next simplest pair of Langlands dual groups: $\SL_n$ and $\PGL_n$. Since $\PGL_n = \SL_n / \ZZ_n$, it follows that the moduli space $\Mm_{\PGL_n}$ can be identified with the finite quotient $\Mm_{\SL_n}/\Gamma$, with $\Gamma$ being the subgroup of $n$-torsion line bundles on $X$. Above a point $b \in B_{SL_n} \cong B_{PGL_n}$, corresponding to the spectral cover $\beta_b : C_b \to X$, the fibres of the Hitchin fibration are given by the compactified Prym variety ${h_{\SL_n}^{-1}(b) = \ol \Prym_{\beta_b}(q)}$ and the quotient ${h_{\PGL_n}^{-1}(b) = \ol \Prym_{\beta_b}(q)} / \Gamma$. \eqref{fibre-wise geometric langlands in moduli} conjectures these fibres are derived equivalent, proven in the regular locus by \eqref{eq FM for Prym}, and extended beyond the regular locus in this paper. 

Switching from the perspective of SYZ mirror symmetry to the homological mirror symmetry of Kontsevich, another motivation comes from the study of branes on the Higgs moduli spaces. $\Mm_G$ is naturally hyperk\"ahler, and Kapustin--Witten \cite{kapustin&witten} have introduced branes that are adapted to the hyperk\"ahler structure. They define a large class of $\BBB$-branes to be hyperholomorphic subvarieties equipped with a hyperholomorphic sheaf, and a large class of $\BAA$-branes to be holomorphic Lagrangian subvarieties equipped with a flat bundle. Kapustin--Witten then predict an exchange of $\BBB$ and $\BAA$-branes by a Fourier-Mukai transform. 

In light of this conjecture, many authors have studied examples of $\BBB$ and $\BAA$-branes under the Fourier--Mukai transform \eqref{fibre-wise geometric langlands in moduli} over $B_{reg}$, from which we highlight the work of Hitchin \cite{hitchin-charclasses}, Hausel--Hitchin \cite{hausel&hitchin} and the first author in collaboration with Jardim \cite{franco&jardim}. Restricting to the case $G = \GL_n$, recent progress has begun to probe the behaviour of branes supported outside of $B_{reg}$, such as \cite{franco&peon} and \cite{FGOP}, where the Fourier--Mukai transforms of Arinkin \cite{arinkin} and Melo--Rapagnetta--Viviani \cite{melo1, melo2} are crucial. One would like to make similar progress for $G = \SL_n$, in which case a corresponding Fourier-Mukai transform is essential.

\subsection{Our work}
Consider a curve $C$ and a finite flat morphism $\beta:C \to X$ with non-empty ramification. Take the pullback of the Poincar\'e sheaf $\Pp$ on $\ol \Jac_C(q) \times \ol \Jac_C(q')$ to define a sheaf $\Rr$ on $\ol \Prym_\beta(q) \times \ol \Prym_\beta(q')$. The main result of this article is that taking $\Rr$ as an integral kernel defines a derived equivalence, and thus a Fourier--Mukai transform. 
\begin{theorem_A} \label{theorem_a} (Theorem \ref{tm fourier-mukai general case}).
Suppose that $X$ is a smooth curve, and that $C$ is a projective curve that is reduced, connected, with locally planar singularities, and equipped with a finite flat morphism $\beta:C \to X$. With two generic polarisations $q$ and $q'$, denote the corresponding Prym varieties by $\ol \Prym_\beta(q)$ and $\ol \Prym_\beta(q')$. The integral functor with kernel $\Rr$,  
\begin{equation} \label{theorem_a equation}
\Psi^{\Rr}_{\ol \Prym \to \ol \Prym'}: D^b(\ol \Prym_\beta(q)) \to D^b(\ol \Prym_\beta(q'),{\Gamma}),
\end{equation}
is an equivalence of categories. 
\end{theorem_A}

We can say more once an endoscopy condition \cite{ngo, hausel&pauly, frenkel&witten} is taken into account. Recall that $\Gamma = \Jac_X[n]$ is the group of n-torsion line bundles on $X$. We say that $\beta: C \to X$ is {\it non-endoscopic} if the tensoral action of $\Gamma$ on the compactified Prym variety $\ol\Prym_\beta(q)$ is free, and $\beta: C \to X$ is {\it endoscopic} otherwise. 

In the case where $\beta : C \to X$ is non-endoscopic, it is possible to descend Theorem A to a Fourier-Mukai transform on the quotient. $\Rr$ descends to define a sheaf $\Qq$ on $\ol \Prym_\beta(q) \times \ol \Prym_\beta(q') / \Gamma$, and taking $\Qq$ as an integral kernel defines a derived equivalence between $\ol\Prym_\beta(q)$ and $\ol \Prym_\beta(q') / \Gamma$, and thus a second Fourier--Mukai transform

\begin{theorem_B} \label{theorem_b}
(Theorem \ref{tm main theorem}).
Suppose that $C$ is projective, reduced, connected with locally planar singularities, and equipped with a finite flat $n:1$ morphism $\beta : C \to X$ with non-empty ramification, such that $\Gamma$ acts freely on $\ol\Prym_\beta(q')$. There exists a sheaf $\Qq$ over $\ol{\Prym}_\beta(q) \times \ol{\Prym}_\beta(q')/\Gamma$ satisfying 
\begin{equation} \label{descent}
\left . \Pp \right|_{\ol{\Prym}_\beta(q) \times \ol{\Prym}_\beta(q')} \cong ( \id_{\ol{\Prym}} \times \pi)^{*} \Qq,
\end{equation}
where $\Pp$ is the Poincar\'e sheaf on $\ol \Jac_C(q) \times \ol \Jac_C(q')$ and $\pi$ denotes the quotient $\ol \Prym_\beta(q') \to \ol \Prym_\beta(q')/ \Gamma$.

Furthermore, the associated integral functor 
\begin{equation} \label{eq FM on compactified Prym}
\Phi^{\Qq}_{\ol\Prym \to \ol\Prym'/\Gamma}: D^b(\ol \Prym_\beta(q)) \to D^b(\ol \Prym_{\beta}(q') / \Gamma),
\end{equation}
is an equivalence of categories. 
\end{theorem_B}
We go on to comment on the relationship between Theorem A and B at the end of Section \ref{descent and endoscopy}, captured by a semi-orthogonal decomposition of $D^b(\ol \Prym_\beta(q'), \Gamma)$ that measures when a complex descends to the quotient. 

\subsection{Conclusions}
We take this opportunity to sketch applications of our results on the Higgs moduli spaces and the ideas presented in Section \ref{motivation}. Recall that for $G=\SL_n$ and $^LG=\PGL_n$, the spectral correspondence describes the fibres of the Hitchin fibration as compactified Prym varieties $h_{\SL_n}^{-1}(b) = \ol \Prym_\beta(q)$ and finite quotients $h_{\PGL_n}^{-1}(b) = h_{\SL_n}^{-1}(b) / \Gamma $. We define the {\it endoscopic locus} $B_{end} \subset B$ as those $b \in B$ for which the corresponding $n:1$-map $\beta_b: C_b \to X$ is endoscopic. This can also be characterised as the locus of spectral curves that are normalised by unramified covers of the base curve $X$. 

In the non-endoscopic case, the equivalence of categories in Theorem B, \eqref{eq FM on compactified Prym}, verifies the conjectural equivalence displayed in \eqref{fibre-wise geometric langlands in moduli} for points outside of the regular locus, extending the SYZ mirror symmetry picture to certain singular fibres. However for $b \in B_{end}$, the endoscopic case in Theorem A demonstrates that \eqref{fibre-wise geometric langlands in moduli} fails, precisely because some complexes in $D^b( \ol \Prym_{\beta}(q) , \Gamma)$ do not descend. Instead Theorem A should be interpreted in terms of a stack-theoretic quotient $[h_{\SL_n}^{-1}(b)/\Gamma]$ and an equivalence of categories
\[
D^b(h_{\SL_n}^{-1}(b)) \cong D^b([h_{\SL_n}^{-1}(b)/\Gamma]).
\]
This is an explicit demonstration of the well-known facts that (a) the geometric Langlands correspondence is incompatible with passage to the moduli spaces, and (b) SYZ mirror symmetry behaves in subtle ways on singular fibres.  

This observation becomes relevant when studying $\Gamma$-equivariant objects which do not descend to the quotient, as addressed by Frenkel--Witten \cite{frenkel&witten} in their study of fractional branes over $\Mm_{\SL_2)}$ and $\Mm_{\PGL_2}$. It also demonstrates that zerobranes $x \in \Mm_{\SL_n}$ that are supported on the endoscopic locus have mirror branes 
\[
^L\Oo_x = \Psi^{\Rr}_{\ol \Prym \to \ol \Prym}(\Oo_x) \in D^b([h_{\SL_n}^{-1}(b)/\Gamma]),
\]
and so these mirror branes are naturally stack-theoretic. In the language of Kapustin--Witten \cite{kapustin&witten}, $\Oo_x$ is an example of an electric eigenbrane, and $^L\Oo_x$ is a magnetic eigenbrane. This highlights the importance of a considering stack theoretic branes in mirror symmetry. 

\subsection{Structure}
The paper is organised as follows. Section \ref{subsec: FM and compactified Jac} recalls properties of compactified Jacobians proven in \cite{melo0} and the construction of the Poincar\'e sheaf due to Arinkin \cite{arinkin} and Melo--Rapagnetta--Viviani \cite{melo1, melo2}. 

Section \ref{sc Pryms} defines compactified Prym varieties following the work of Carbone \cite{carbone}. We shall observe that essential geometry geometric properties from Section \ref{subsec: FM and compactified Jac} restricts to the compactified Prym varieties.

Section \ref{sc constructing a kernel} studies the restriction of the Poincar\'e sheaf to the compactified Prym varieties - a sheaf $\Rr$ on $\ol \Prym_\beta(q) \times \ol \Prym_\beta(q')$. We provide properties including acyclicity, flatness and $\Gamma$-equivariance. 

Section \ref{equivalence equivariant derived category} deals with the construction of the Fourier-Mukai transform with kernel $\Rr$ to the equivariant derived category. This contains Theorem A, which establishes the equivalence of categories. The proof applies criteria of Bridgeland \cite{bridgeland} for fully-faithfulness and for equivalence. The later relies on the existence of left and right adjoints to $\Phi^\Qq_{\ol\Prym \to \ol\Prym/\Gamma}$ which are provided by the work of Hern\'andez-Ruip\'erez--Lopez-Mart\'{\i}n--Sancho-de-Salas \cite{HLS} on integral functors over Gorenstein schemes. 

Section \ref{FM to quotient} constructs the sheaf $\Qq$ on $\ol \Prym_\beta(q) \times \ol \Prym_\beta(q') / \Gamma$ by descent of $\Rr$ along the quotient map $\pi: \ol \Prym_\beta(q') \to \ol \Prym_\beta(q')/ \Gamma$. We show the integral functor \eqref{eq FM on compactified Prym} associated to $\Qq$ is an equivalence of categories, which provides us with Theorem B. We conclude by contrasting the endoscopic and non-endoscopic cases more explicitly, making use of a semi-orthogonal decomposition of $D^b(\ol \Prym_\beta(q'), \Gamma)$. 

\

\par From now on, \textit{scheme} shall always refer to a scheme of finite type over an algebraically closed field $k$ of characteristic $0$.

\subsection{Acknowledgements}

The first author wants to thank Margarida Melo and Filipo Viviani for enlightening conversations over previous versions of this work. Particular thanks are due to Filipo Viviani for suggesting the study of the equivariant derived category.

\

\par After the publication of a draft of a previous version of this work, the authors became aware that Michael Groechenig and Shiyu Shen have independently obtained similar results in \cite{groechenig&shen}.

\section{Fourier--Mukai transform on compactified Jacobian varieties} \label{subsec: FM and compactified Jac}

After Grothendieck, the Picard moduli functor of a projective $k$-scheme $Z$ is represented by a group scheme $\Pic(Z)$. We denote by $\Pic^0(Z)$ the connected component containing $\Oo_Z$. Altman and Kleiman \cite{altman&kleiman} generalized this construction showing that there exists an algebraic space $\Pic^{=}(Z)$ parametrizing simple torsion-free sheaves with rank $1$ on each irreducible component of $Z$. If $\Oo_Z$ itself is torsion-free, $\Pic(Z)$ is contained in $\Pic^{=}(Z)$ \cite{altman&kleiman} and we shall denote by $\ol\Pic^{\, 0}(Z)$ the connected component of $\Pic^{=}(Z)$ containing $\Oo_Z$.

When $Z=C$ is a connected projective curve, we call $\Jac_C = \Pic^{0}(C)$ the Jacobian variety of $C$. Furthermore, if $C$ is smooth, then $\Jac_C$ is an abelian variety of dimension equal to the genus of $C$. For an arbitrary abelian variety $A$, we call $\Pic^{0}(A)$ the dual abelian variety of $A$, which we denote by $\widehat A$. We recall that the Jacobian of a smooth projective curve is self-dual, $\widehat\Jac_C \cong \Jac_C$, while for general abelian varieties $A$ and $\widehat A$ are simply isogeneous. The universal family for $\Pic^0(A)$ is a line bundle on $A\times \widehat A$, called the Poincar\'e bundle. Using this bundle as an integral kernel, Mukai \cite{mukai} constructed a categorical equivalence between $D^b(A)$ and $D^b(\widehat{A})$, known as a Fourier--Mukai transform. 

If $C$ is singular, $\Jac_C$ is not projective in general, and one has at hand a broad literature on the compactifications \cite{altman&kleiman, esteves, simpson, melo0}. We shall assume that
\begin{equation*} 
\label{dagger} \text{$C$ is a projective, reduced, connected curve with locally planar singularities\tag{$\dagger$}}, 
\end{equation*}
and denote by $g$ its arithmetic genus. If $\{ C_i\}_{i \in \Irr_C}$ is the set of irreducible components of $C$, a {\it polarization} on $C$ is $q = \{q_i \}_{i \in \Irr_C}$, where $q_i \in \QQ$ and $|q|:= \sum_{i \in \Irr_C} q_i \in \ZZ$. We say that a polarization $q$ on $C$ is {\it general} if, for every non-trivial and connected subcurve $S \subset C$ whose complement is connected, one has that $|q|_S:= \sum_{i \in \Irr_{S}} q_i \notin \ZZ$. Given a torsion-free sheaf $\Ff$ on $C$ with $\chi(\Ff) = |q|$ and rank $1$ (on each irreducible component), we shall say that $\Ff$ is {\it $q$-stable} (resp. {\it $q$-semistable}), if for every proper subcurve $S \subset C$, one has 
\[
\chi(\Ff_S) > |q|_S \qquad (\textnormal{resp. } \chi(\Ff_S) \geq |q|_S),
\]
where $\Ff_S$ is the biggest torsion-free quotient of the restriction $\Ff|_S$. 

For our choice of $C$, Altman and Kleiman \cite{altman&kleiman} showed that $\Pic^{=}(C)$ is a scheme locally of finite type. or a general polarization $q$, Melo-Rapagnetta-Viviani \cite{melo0} and Esteves \cite{esteves} define the {\it fine compactified Jacobian} $\ol\Jac_C(q)$ to be the subscheme of $\Pic^{=}(C)$ parametrizing $q$-semistable sheaves in the connected component of $\Oo_C$. Esteves also shows that $\Pic^=(C)$ can be covered with $\ol\Jac_C(q)$ for varying choices of $q$. Since $q$ is general, $q$-semistability is equivalent to $q$-stability, and $\ol\Jac_C(q)$ is a fine moduli space. We denote by $\Uu_q$ the universal sheaf for the classification of $q$-stable pure dimension $1$ sheaves on $C$. 

\begin{theorem}[Theorem A \cite{melo0}]
\label{tm fine comp Jacobians}
Let $C$ be a curve as in \eqref{dagger} and $q$ a general polarization on $C$. Then, the fine compactified Jacobian $\ol\Jac_C(q)$ 
\begin{itemize}
\item is a connected reduced projective scheme with locally complete intersection singularities;
\item has trivial dualizing sheaf;
\item has as smooth locus the open subset $\Jac_C(q)$
parametrizing those line bundles which are $q$-stable. $\Jac_C(q)$ is a disjoint union of copies of $\Jac_C$.
\end{itemize}
\end{theorem}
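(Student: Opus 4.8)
\textbf{Proof plan for Theorem~\ref{tm fine comp Jacobians}.}
The statement collects three properties of a fine compactified Jacobian $\ol\Jac_C(q)$: that it is connected, reduced, projective with locally complete intersection singularities; that its dualizing sheaf is trivial; and that its smooth locus is the open subset $\Jac_C(q)$ of $q$-stable line bundles, which is a disjoint union of copies of $\Jac_C$. Since this is quoted as ``Theorem~A of \cite{melo0}'', the cleanest route is to extract the proof directly from the local structure of $\Pic^{=}(C)$ near a torsion-free rank-$1$ sheaf $\Ff$ on a curve with locally planar singularities. The plan is to proceed one bullet at a time, the deformation-theoretic local model being the engine that drives all three.

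First I would establish the local picture. Projectivity of $\ol\Jac_C(q)$ for a \emph{general} polarization follows because $q$-semistability equals $q$-stability (so the semistable locus is open and closed, hence proper over $k$, being a closed subscheme of the locally finite-type $\Pic^{=}(C)$ cut out by a boundedness/GIT argument as in \cite{esteves}), while connectedness follows from Esteves's construction together with the observation that $\ol\Jac_C(q)$ contains $\Jac_C(q)$ as a dense open and all boundary strata are limits of families from $\Jac_C$. For the scheme-theoretic properties, the key input is that for a locally planar curve, the completed local ring of $\Pic^{=}(C)$ at $[\Ff]$ is, by Fantechi--G\"ottsche--van Straten type deformation theory, isomorphic to the base of the miniversal deformation of the pair $(C,\Ff)$ with $C$ held fixed, and this is pro-represented by a quotient of a power series ring whose relations are governed by $\Ext^1_C(\Ff,\Ff)$ and the obstruction space $\Ext^2_C(\Ff,\Ff)$. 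For locally planar singularities one has local-to-global and local duality statements forcing the deformation space of $\Ff$ to be a complete intersection: concretely, \'etale-locally near a planar singularity $\Ff$ is a module over $k[[x,y]]/(f)$, its deformations are unobstructed in the appropriate relative sense, and the defining equations of $\Pic^{=}(C)$ match those of the compactified Jacobian of the local planar singularity, which is a complete intersection. This gives the ``locally complete intersection'' and, since l.c.i.\ schemes over a field of characteristic $0$ that are generically reduced are reduced (they are Cohen--Macaulay, hence satisfy $S_1$, and $\Jac_C(q)$ is a dense reduced open), the ``reduced'' claim.

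Second, triviality of the dualizing sheaf. Because $\ol\Jac_C(q)$ is l.c.i., it is Gorenstein, so $\omega_{\ol\Jac}$ is a line bundle; I would compute it on the smooth dense open $\Jac_C(q)$ first, where each component is a copy of the abelian variety $\Jac_C$ and hence has trivial canonical bundle, so $\omega_{\ol\Jac}$ is trivial on a dense open. To propagate triviality across the boundary one uses that $\ol\Jac_C(q)$ is normal in codimension one? --- no; instead the right argument is that $\ol\Jac_C(q)$ admits a symmetry by translation (the Jacobian $\Jac_C$, or rather $\ol\Pic^{\,0}(C)$, acts on it) and the boundary has codimension $\geq 2$ only when $C$ is irreducible; in general one invokes the explicit local model: the dualizing sheaf of the compactified Jacobian of a planar curve singularity is trivial (a computation with the Cohen--Macaulay module structure), and these local trivializations glue because the transition data is encoded in the universal sheaf $\Uu_q$, which is well-defined in the fine case. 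Alternatively, and perhaps most efficiently, one cites that $\omega_{\ol\Jac}$ is translation-invariant under the $\ol\Pic^{\,0}(C)$-action and a translation-invariant line bundle on a projective variety acted on by a connected group with a dense orbit-like open is trivial once it is trivial at one point.

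Third, the identification of the smooth locus. One inclusion is immediate: $\Jac_C(q)$ is smooth because it is a disjoint union of translates of $\Jac_C = \Pic^0(C)$, which for a locally planar (hence Gorenstein) curve is smooth of dimension $g$ --- this uses that $H^1(C,\Oo_C)$ has dimension $g$ and $\Pic$ is smooth when $C$ is reduced Gorenstein. For the reverse inclusion, one must show that a $q$-stable sheaf $\Ff$ that is \emph{not} locally free fails to be a smooth point. Here I would use the local model again: at a planar singularity where $\Ff$ is not free, the tangent space $\Ext^1_C(\Ff,\Ff)$ has dimension strictly larger than $g$ (the jump in $\dim \Ext^1$ at non-locally-free sheaves is a standard computation for planar singularities, e.g.\ for the node it jumps by the number of non-free local branches), while every component of $\ol\Jac_C(q)$ has dimension exactly $g$; hence $[\Ff]$ is a singular point. \textbf{The main obstacle} is this last local $\Ext$-computation together with pinning down the deformation-theoretic local model precisely enough to conclude ``l.c.i.'': one must verify carefully that the deformations of $\Ff$ \emph{with $C$ fixed} are controlled by a complete intersection ideal in the planar case --- this is exactly where local planarity is essential and where one leans on the structure theory of Cohen--Macaulay modules over plane curve singularities (the ``row and column'' / matrix-factorization description). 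Everything else is bookkeeping around this local statement and the translation action of $\ol\Pic^{\,0}(C)$.
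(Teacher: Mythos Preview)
The paper does not contain a proof of this theorem: it is stated as Theorem~A of \cite{melo0} and quoted verbatim as a known result, with no accompanying argument. There is therefore nothing in the paper to compare your proposal against.

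That said, your sketch is a reasonable outline of how the result is actually established in \cite{melo0} (and in the antecedent literature it draws on), and you correctly identify the deformation-theoretic local model for planar singularities as the engine. A couple of points where your plan is looser than it should be: for triviality of the dualizing sheaf you waver between three arguments (codimension, local computation, translation-invariance) without committing; the argument in \cite{melo0} goes through the relation with the versal deformation of the curve and the Hilbert scheme, not through a translation-invariance trick, and your ``dense orbit-like open'' heuristic is not quite right since $\Jac_C$ does not act with a dense orbit on $\ol\Jac_C(q)$ when $C$ is reducible. Similarly, your connectedness argument (``boundary strata are limits of families from $\Jac_C$'') is circular as stated, since that is essentially what you are trying to prove; \cite{melo0} deduces connectedness from the surjectivity of a suitable Abel map from a connected Hilbert scheme. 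But none of this matters for the present paper, which simply invokes the result.
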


Mukai's pioneering ideas in \cite{mukai} provided an autoequivalence of the derived category of the Jacobian of a smooth curve, the so-called Fourier--Mukai transform. Such derived equivalence was generalised by Arinkin \cite{arinkin} to a derived autoequivalence of compactified Jacobians over an integral planar curve (which are fine), and by Melo--Rapagnetta--Viviani \cite{melo0, melo1, melo2} to a derived equivalence between (possibly equal) fine compactified Jacobians, $\ol\Jac_C(q)$ and $\ol\Jac_C(q')$, associated to two general polarizations $q$ and $q'$ on a curve of the form \eqref{dagger}. The equivalence is provided by an integral functor having the so-called Poincar\'e sheaf as an integral kernel. Let us review the construction of this sheaf.

Consider universal families $\Uu$ and $\Uu^{0}$ for the fine moduli spaces $\ol \Jac_C (q)$ and $\Jac_C$, respectively. Denote the canonical projections $f_{ij}$, $f_i$ onto the corresponding factors, as depicted below.
\begin{equation}\label{eq: f projections}
\xymatrix{
&  C \times \ol{\Jac}_C(q) \times \Jac_C(q')  \ar[ld]_{f_{12}} \ar[d]_{f_{13}} \ar[rd]^{f_{23}} &
\\
C \times \ol{\Jac}_C(q) & C\times \Jac_C(q') & \ol{\Jac}_C(q) \times \Jac_C(q')
}
\quad \quad
\xymatrix{
&  C \times \ol{\Jac}_C(q)  \ar[ld]_{f_{1}} \ar[rd]^{f_{2}} &
\\
C & & \ol{\Jac}_C(q)
}
\end{equation}
Given a flat and proper morphism $f:Y\to S$ whose geometric fibres are curves, the determinant of cohomology defined in \cite{knudsen&mumford}, \cite[Section 6.1]{esteves} is a covariant functor 
\[ 
\Dd_f: \{ S\text{-flat sheaves on }Y \} \longrightarrow \{ \text{invertible sheaves on }S \}
\]
satisfying base change, projection and additivity formulae \cite[Propn. 44]{esteves}. Define the Poincar\'e line bundle \cite{melo1} over $\ol \Jac_C (q) \times \Jac_C(q')$ to be
\begin{equation}\label{eq:Poincarebundle}
\Pp^{'}:= \Dd_{f_{23}} \left ( f_{12}^*\Uu_q \otimes f_{13}^* \Uu^0_{q'}  \right )^{-1} \otimes \Dd_{f_{23}} \left ( f_{13}^* \Uu^0_{q'}  \right ) \otimes \Dd_{f_{23}} \left ( f_{12}^* \Uu_q  \right ).
\end{equation}
By base change with respect to Cartesian squares involving $f_{13}$ and $f_{12}$, one can check that the line bundle $\Pp'_M := \Pp' |_{\ol \Jac_C (q) \times \{M\}}$ is given by
\begin{equation} \label{eq description of Pp_M}
\Pp'_M = \Dd_{f_2} (\Uu_q \otimes f_1^*M)^{-1} \otimes \Dd_{f_2}(f_1^*M) \otimes \Dd_{f_2}(\Uu_q).
\end{equation}
%$\Uu$ is universal modulo the equivalence relation used in the moduli problem at hand - that means two class representatives $\Uu$ and $\Uu^{'}$ satisfy $\Uu \cong \Uu' \otimes f_2^{*} N$ for some $N \in Pic(\ol \Jac_X)$. By the projection formula the restriction (\ref{{eq description of Pp_M}}) is independent of the choice of representative. 
Repeat the construction symmetrically to obtain a Poincar\'e line bundle $\Pp^{\prime\prime}$ over $\Jac_C(q) \times \ol \Jac_C (q')$. By symmetry, they agree over the common open subset $\Jac_C(q) \times \Jac_C(q')$, and so $\Pp'$ and $\Pp^{\prime\prime}$ glue to define a line bundle $\Pp^{\sharp}$ over the union of their supports,
\[
\left (\ol{\Jac}_C(q) \times \ol{\Jac}_C(q') \right)^\sharp 
:= \left( \Jac_C(q) \times \ol{\Jac}_C(q') \right) \cup \left( \ol{\Jac}_C(q) \times \Jac_C(q') \right),
\]
whose complement has codimension $2$. Denote by $\xi$ the inclusion morphism into $\ol{\Jac}_C(q) \times \ol{\Jac}_C(q')$. We define the {\it Poincar\'e sheaf} to be the pushforward 
\begin{equation} \label{eq def Poincare sheaf}
\Pp := \xi_{*} \Pp^{\sharp}.
\end{equation}

A key feature of the Poincar\'e sheaf is that it is a maximal Cohen-Macaulay sheaf, flat over each factor \cite{arinkin, melo2}. This is proven by showing that $\Pp$ is the descent of a certain maximal Cohen--Macaulay sheaf $\Gg$ on $\Hilb_C \times \ol \Jac_C(q')$ via the Abel--Jacobi map.

With $\Pp$ as the kernel we can write down the Fourier--Mukai transform. Define the projections $\p_1$, $\p_2$ onto the first and second factors
\begin{equation}\label{eq:projections-compact-Jac}
\xymatrix{
&  \ol{\Jac}_C(q) \times \ol{\Jac}_C(q')  \ar[ld]_{\p_1} \ar[rd]^{\p_2} &
\\
\ol{\Jac}_C(q) & & \ol{\Jac}_C(q'). 
}
\end{equation}
and consider the integral functor 
\begin{equation} \label{eq FM Arinkin}
\morph{D^b \left ( \ol{\Jac}_C(q) \right )}{D^b \left ( \ol{\Jac}_C(q') \right )}{\Ff^\bullet}{R \p_{2,*}(\p_1^*\Ff^\bullet \otimes \Pp).}{}{\Phi^{\Pp}_{\ol\Jac \to \ol\Jac'}}
\end{equation}
Extending the work of Arinkin \cite{arinkin} for integral curves, Melo–Rapagnetta–Viviani \cite{melo2} prove the following.
\begin{theorem} \label{thm:derivedequiv}
Let $C$ be curve satisfying \eqref{dagger} and let $q$ and $q'$ be two general polarisations on $C$ such that $|q| = |q'| = \chi(\Oo_C)$. Then, the above integral functor is a derived equivalence.
\end{theorem}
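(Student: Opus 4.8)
The plan is to deduce the equivalence from Bridgeland's criterion for integral functors between projective Gorenstein schemes, using the structural facts about $\ol\Jac_C(q)$ recorded in Theorem~\ref{tm fine comp Jacobians} together with the fact (recalled above) that the Poincar\'e sheaf $\Pp$ is maximal Cohen--Macaulay and flat over each factor. This follows the strategy of Arinkin for integral planar curves, adapted to the reduced case.

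First I would set up the functor and its adjoints. Because $\Pp$ is flat over each of the two factors it has relative Tor-dimension $0$ with respect to both projections in \eqref{eq:projections-compact-Jac}, so $\Phi^{\Pp}_{\ol\Jac\to\ol\Jac}$ genuinely carries $D^b$ into $D^b$. Since $\ol\Jac_C(q)$ and $\ol\Jac_C(q')$ are projective and Gorenstein --- indeed local complete intersections with trivial dualizing sheaf, by Theorem~\ref{tm fine comp Jacobians} --- the theory of integral functors over Gorenstein schemes (Hern\'andez-Ruip\'erez--Lopez-Mart\'{\i}n--Sancho-de-Salas) provides both a left and a right adjoint for $\Phi^{\Pp}$, again of integral type, with kernels obtained from $\Pp$ by relative Grothendieck--Serre duality; triviality of the dualizing sheaves collapses these to $\Pp$ itself up to the shift by $g=\dim\ol\Jac_C(q)$.

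With adjoints in place, Bridgeland's criterion reduces the statement to the point-orthogonality relations: for all closed points $x,x'$ of $\ol\Jac_C(q)$,
\[
\Hom_{D^b(\ol\Jac_C(q'))}\!\bigl(\Phi^{\Pp}(k(x)),\,\Phi^{\Pp}(k(x'))[j]\bigr) = 0 \quad\text{for }x\neq x'\text{ and all }j,
\]
while for $x=x'$ this group equals $k$ when $j=0$ and vanishes for $j\notin[0,g]$; combined with connectedness of $\ol\Jac_C(q')$ (Theorem~\ref{tm fine comp Jacobians}) these give full faithfulness, and since $\omega_{\ol\Jac_C(q')}\cong\Oo$ the supplementary numerical condition in Bridgeland's passage from full faithfulness to equivalence holds automatically. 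The substance is therefore the computation of $\Phi^{\Pp}(k(x))$. Flat base change along $\p_1$ identifies it, for every $x$, with the restriction $\Pp_x=\Pp|_{\{x\}\times\ol\Jac_C(q')}$, which is a rank-$1$ torsion-free maximal Cohen--Macaulay sheaf on $\ol\Jac_C(q')$. When $x$ lies in the smooth locus $\Jac_C(q)$ --- a disjoint union of copies of $\Jac_C$ by Theorem~\ref{tm fine comp Jacobians} --- the orthogonality of two such sheaves is a Mukai-type cohomology vanishing, run as in the smooth case after translating by the natural $\Jac_C$-action. When $x$ lies in the boundary one instead uses that $\Pp$ is the descent, along the Abel--Jacobi map, of the maximal Cohen--Macaulay sheaf $\Gg$ on $\Hilb_C\times\ol\Jac_C(q')$: this transports the $R\Hom$-computation up to the Hilbert scheme, whose Abel--Jacobi fibres are projective spaces, where the required vanishing becomes accessible. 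Finally I would propagate the vanishing from the dense smooth locus to all of $\ol\Jac_C(q)$ by a semicontinuity argument, using the maximal Cohen--Macaulay property of $\Pp$ to exclude jumps of the governing $\mathcal{E}xt$-sheaves along the boundary.

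The main obstacle is exactly this last step at the boundary. Away from the big open subset $(\ol\Jac_C(q)\times\ol\Jac_C(q'))^{\sharp}$ the kernel $\Pp$ is only a sheaf, not a line bundle, so for a non-invertible $x$ the sheaf $\Phi^{\Pp}(k(x))$ is not a translate of a theta-type line bundle and its self- and cross-$\Ext$ groups cannot be read off from cohomology of an abelian variety; pinning them down requires the full force of the flatness and maximal Cohen--Macaulay properties of $\Pp$ and, through the Abel--Jacobi descent, a delicate analysis of $\Gg$ on $\Hilb_C$. This is the technical heart of the work of Arinkin and of Melo--Rapagnetta--Viviani.
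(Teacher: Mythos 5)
The paper does not prove Theorem~\ref{thm:derivedequiv}; as the sentence immediately preceding it makes explicit, the result is quoted from Melo--Rapagnetta--Viviani \cite{melo2} (extending Arinkin \cite{arinkin}) and is used as a black box when the paper later proves its own Main Theorem for compactified Pryms. There is therefore no internal proof to match your sketch against.

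Viewed as a reconstruction of the proof in the literature, your outline has the right scaffolding (Cohen--Macaulayness and biflatness of $\Pp$, Gorenstein/trivial-$\omega$ structure supplying adjoints, descent from $\Hilb_C$), but the step you propose for full faithfulness would fail on a singular source. You reduce to the conditions that $\Hom\bigl(\Phi^{\Pp}(k(x)),\Phi^{\Pp}(k(x'))[j]\bigr)$ vanish for $x\neq x'$, equal $k$ at $j=0$ for $x=x'$, and vanish for $j\notin[0,g]$. That is the Bondal--Orlov shape of the criterion, which presupposes that the source is \emph{smooth}. Here $\ol\Jac_C(q)$ is only a reduced local complete intersection, and at a singular point $x$ the residue field has infinite projective dimension, so $\Ext^{j}_{\ol\Jac_C(q)}(\Oo_x,\Oo_x)\neq 0$ for all $j\geq 0$; consequently any functor whose image $\Phi^{\Pp}(\Oo_x)$ satisfies the cohomological-amplitude bound you ask for would \emph{fail} to induce isomorphisms on these higher $\Ext$ groups, i.e.\ would not be fully faithful. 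On a Gorenstein source one must instead apply Bridgeland's abstract Theorem~2.3 with the skyscraper spanning class of \cite[Lemma~1.26]{HLS} and verify the \emph{full} tower of $\Ext$-isomorphisms in every degree --- an infinite family of nonzero groups at boundary points, not a bounded-range statement. This is exactly why Arinkin and Melo--Rapagnetta--Viviani take the more direct Mukai route: rather than an orthogonality check they prove Fourier inversion by computing the convolution of $\Pp$ with its (shifted, dualized) transpose on the triple product and identifying the result with $\Oo_{\Delta}[-g]$; the Hilbert-scheme descent and the MCM property are the ingredients of that kernel computation. A secondary slip: with trivial dualizing sheaves the two adjoint kernels of Proposition~\ref{pr adjoints} both become $\Pp^{\vee}[g]$, not $\Pp[g]$; it is the coincidence of left and right adjoints that makes Bridgeland's equivalence criterion automatic once full faithfulness is established, not an identification with $\Pp$ itself.
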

The condition $|q| = |q'| = \chi(\Oo_C)$ is not strictly necessary, but we assume this in order to make the choice of $\Pp$ natural. Observe that, for every $q$ and $q'$ such that $|q| = |q'|$, one has $\Jac_C(q) = \Jac_C(q')$, which we abbreviate by $\Jac_C$ when further $|q| = |q'| = \chi(\Oo_C)$.

Using this derived equivalence, Arinkin showed that compactified Jacobians of integral curves are self-dual \cite{arinkin} and Melo--Viviani--Rapagnetta extended this statement \cite[Thm. B]{melo2} to the case of singular curves of the form \eqref{dagger}. They showed that the morphism
\[
\morph{\ol\Jac_C(q')}{\ol\Pic^{\, 0}(\ol\Jac_C(q))}{\Nn}{\Pp|_\Nn}{}{\Theta_C}
\]
is an open embedding. It is conjectured to be an isomorphism for any reduced curve with locally planar singularities. One sees that $\Theta_C$ restricts to an isomorphism of groups 
\begin{equation} \label{eq Psi^0}
\Theta_C^0 : \Jac_C \stackrel{\cong}{\longrightarrow}  \Pic^0(\ol\Jac_C(q)),
\end{equation}
which is equivariant with respect to the action of $\Jac_C$ on $\ol\Jac_C(q')$ and $\Pic^0(\ol\Jac_C(q))$ on $\ol\Pic^{\, 0}(\ol\Jac_C(q))$ under tensorization. We observe that $\Theta_C^0$ depends only on the construction of the Poincar\'e bundle $\Pp' \to \ol\Jac_C(q) \times \Jac_C$ obtained in \cite{melo1}.

\section{Compactified Prym varieties}
\label{sc Pryms}

We now introduce compactified Prym varieties associated to a curve $C$ satisfying \eqref{dagger}, a smooth curve $X$ and a finite morphism $\beta: C \to X$ with non-empty ramification. We also take a general polarization $q$ such that $|q| = \chi(\Oo_C)$, so the compactified Jacobian $\ol\Jac_C(q)$ is fine. Consider the map 
\[
\Nm_\beta := \det(R\beta_*\Oo_C)^{-1} \otimes \det \circ R\beta_* : \ol\Jac_C(q) \longrightarrow \Jac_X.
\]
Following Carbone \cite{carbone}, the \textit{compactified Prym variety} is defined to be the subvariety
\[
\ol{\Prym}_\beta(q) := \ker \Nm_\beta.
\]
We consider as well $\Prym_\beta(q)$ to be the locally-free locus of $\ker \Nm_\beta$, {\it i.e.} its intersection with $\Jac_C(q)$. Thanks to Hausel--Pauly \cite{hausel&pauly}, the restriction of $\Nm_\beta$ to $\Jac_C(q)$ coincides with the Norm map. This justifies calling $\Prym_\beta(q)$ and $\ol \Prym_\beta(q)$ the Prym variety and the compactified Prym variety associated to $\beta$ and $q$. 

Observe that $\beta^*\Jac_X$ is naturally a subgroup of $\Jac_C$ isomorphic to $\Jac_X$, as $\beta$ is ramified and $\ker \beta^*$ is just $\Oo_X$. One has a natural action of $\Jac_C$ over $\overline{\Jac}_C(q)$ by tensorization, and so does its subgroup $\beta^*\Jac_X$. The projection formula ensures for any $\Ff \in \ol{\Jac}_C(q)$ and any $L \in \Jac_X$,
\begin{equation} \label{eq Nm and otimes}
\Nm_\beta \left (  \beta^*L \otimes \Ff \right ) \cong L^n \otimes \Nm_\beta(\Ff),
\end{equation}
as was noted by Carbone \cite{carbone}. Hence, for every $\Ff \in \ol\Prym_\beta(q)$, the composition 
\[
\Jac_X \stackrel{\Ff \otimes \beta^*}{\longrightarrow} \ol\Jac_C(q) \stackrel{\Nm_\beta}{\longrightarrow} \Jac_X
\]
does not depend on the choice of $\Ff$ and amounts to the surjection $[n]:\Jac_X \to \Jac_X$, $L \mapsto L^{\otimes n}$, whose kernel is the subgroup of $n$-torsion line bundles that we shall denote by
\[
\Gamma := \Jac_X[n].
\] 
Furthermore, $\Gamma$ is the subgroup of $\beta^*\Jac_X$ that preserves $\ol{\Prym}_\beta(q)$ and any $\beta^*\Jac_X$-orbit intersects $\ol{\Prym}_\beta(q)$ in a $\Gamma$-orbit, as one can always choose $L$ to be $\Nm_\beta(\Ff)^{-1/n}$ and this choice is unique up to $\Gamma$. This provides the following description that can be found in \cite{hausel&thaddeus} for the case of $C$ smooth.

%\par \textcolor{blue}{(J: Instead of $-1/n$ it should be $-n$ right? Also, the reference to $L$ is a bit confusing when we say the orbits intersect. We should also mention that $n$ should be the degree of the ramified cover $\beta$)}.

\begin{lemma}
	\label{lm decomposition of olJac_C}
Let $C$ be a curve as in \eqref{dagger}, $X$ a smooth curve, $q$ a general polarization on $C$ and $\beta : C \to X$ a finite flat morphism with non-empty ramification. One has the isomorphism of schemes 
	\begin{equation} \label{eq description of J_C in terms of Prym}
	\overline{\Jac}_C(q) \cong \quotient{\Jac_X \times \, \ol{\Prym}_\beta(q)}{\Gamma}. 
	\end{equation}
	Hence, the composition of $[n]$ with the projection to the first term 
	\[
	\overline{\Jac}_C(q) \longrightarrow \quotient{\Jac_X}{\Gamma},
	\]
	amounts to the composition of $\Nm_\beta$ with the quotient map $\Jac_X \to \quotient{\Jac_X}{\Gamma}$. Moreover, the projection to the second term is a map
	\begin{equation} \label{eq chi}
	\chi : \overline{\Jac}_C(q) \longrightarrow \quotient{\ol{\Prym}_\beta(q)}{\Gamma},
	\end{equation}
	with smooth fibres isogenous to $\Jac_X$.
\end{lemma}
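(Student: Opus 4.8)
The plan is to realise both sides of \eqref{eq description of J_C in terms of Prym} as geometric quotients of a single finite \'etale $\Gamma$-cover of $\ol\Jac_C(q)$, produced by a shearing isomorphism. First I would introduce the morphism
\[
a\colon \Jac_X\times\ol\Prym_\beta(q)\longrightarrow\ol\Jac_C(q),\qquad (L,\Gg)\longmapsto\beta^*L\otimes\Gg,
\]
the restriction of the tensorisation action of $\Jac_C$ on $\ol\Jac_C(q)$ along $\beta^*\colon\Jac_X\hookrightarrow\Jac_C$ and along $\ol\Prym_\beta(q)\hookrightarrow\ol\Jac_C(q)$, and let $\Gamma$ act on its source by $\gamma\cdot(L,\Gg)=(L\otimes\gamma^{-1},\,\beta^*\gamma\otimes\Gg)$. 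This action is \emph{free}: it is already free, by translation, on the $\Jac_X$-factor, even though the action of $\Gamma$ on $\ol\Prym_\beta(q)$ alone need not be --- this is precisely the endoscopic phenomenon. Since $a$ is manifestly $\Gamma$-invariant, it descends to a morphism $\bar a\colon(\Jac_X\times\ol\Prym_\beta(q))/\Gamma\to\ol\Jac_C(q)$, and it suffices to prove that $\bar a$ is an isomorphism.

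The key step is the following. Using the identity \eqref{eq Nm and otimes} together with the fact that $\Nm_\beta$ restricts to the constant morphism $\Oo_X$ on $\ol\Prym_\beta(q)=\Nm_\beta^{-1}(\Oo_X)$, the assignment $(L,\Gg)\mapsto(\beta^*L\otimes\Gg,\,L)$ defines a morphism
\[
\phi\colon\Jac_X\times\ol\Prym_\beta(q)\longrightarrow\ol\Jac_C(q)\times_{\Jac_X}\Jac_X,
\]
where the fibre product is formed along $\Nm_\beta\colon\ol\Jac_C(q)\to\Jac_X$ and along the isogeny $[n]\colon\Jac_X\to\Jac_X$. I expect $\phi$ to be an isomorphism, with inverse $(\Ff,L)\mapsto(L,\,\beta^*L^{-1}\otimes\Ff)$; here one uses \eqref{eq Nm and otimes} and the defining relation $\Nm_\beta(\Ff)=L^{\otimes n}$ of the fibre product to see that $\beta^*L^{-1}\otimes\Ff$ really lies in $\ol\Prym_\beta(q)$. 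Moreover $\phi$ carries the $\Gamma$-action above to the $\Gamma$-action on $\ol\Jac_C(q)\times_{\Jac_X}\Jac_X$ by translation in the second factor, and it intertwines $a$ with the first projection $\mathrm{pr}_1$ of that fibre product.

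Now $\mathrm{pr}_1$ is the base change along $\Nm_\beta$ of $[n]\colon\Jac_X\to\Jac_X$, which --- being multiplication by $n$ on an abelian variety over a field of characteristic $0$ --- is a finite \'etale Galois cover with group $\Gamma=\Jac_X[n]$; hence so are $\mathrm{pr}_1$ and $a$. Thus $a$ realises $\ol\Jac_C(q)$ as the geometric quotient of $\Jac_X\times\ol\Prym_\beta(q)$ by the free $\Gamma$-action, so $\bar a$ is an isomorphism, which is \eqref{eq description of J_C in terms of Prym}. The two remaining statements follow by transporting the projections of the product through this identification: composing $\ol\Jac_C(q)\cong(\Jac_X\times\ol\Prym_\beta(q))/\Gamma$ with the projection onto $\Jac_X/\Gamma$ and then with the isomorphism $\Jac_X/\Gamma\xrightarrow{\sim}\Jac_X$ induced by $[n]$ sends $\Ff=\beta^*L\otimes\Gg$ to $L^{\otimes n}=\Nm_\beta(\Ff)$, so it equals $\Nm_\beta$; and the projection onto the second factor is the asserted map $\chi$ of \eqref{eq chi}, whose target $\ol\Prym_\beta(q)/\Gamma$ exists as a projective scheme since $\Gamma$ is finite and $\ol\Prym_\beta(q)$ is projective. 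Finally the fibre of $\chi$ over the class of a sheaf $\Gg$ is $\bigl(\Jac_X\times(\Gamma\cdot\Gg)\bigr)/\Gamma\cong\Jac_X/\mathrm{Stab}_\Gamma(\Gg)$, which, being the quotient of $\Jac_X$ by a finite subgroup acting by translation, is an abelian variety isogenous to $\Jac_X$, and in particular smooth.

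The step requiring the most care is checking that $\phi$ is a genuine morphism of schemes landing in the fibre product --- equivalently, that \eqref{eq Nm and otimes} and $\Nm_\beta|_{\ol\Prym_\beta(q)}\equiv\Oo_X$ are identities of morphisms rather than merely pointwise assertions, which they are by the projection formula for $R\beta_*$ and by the definition of $\ol\Prym_\beta(q)$ as a scheme-theoretic fibre --- together with computing the fibres of $\chi$ cleanly in the endoscopic case, where $\Gamma$ acts non-freely on $\ol\Prym_\beta(q)$ and $\ol\Prym_\beta(q)/\Gamma$ is only a geometric quotient. Once the shearing isomorphism $\phi$ is in hand, the rest is formal.
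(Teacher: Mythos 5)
Your argument is correct and is built on the same map as the paper's: the tensorisation morphism $\eta\colon\Jac_X\times\ol\Prym_\beta(q)\to\ol\Jac_C(q)$, $(L,\Gg)\mapsto\beta^*L\otimes\Gg$ (which you call $a$). Where the paper's proof is terse --- it asserts that $\eta$ is ``locally invertible, hence \'etale'' and is a $\Gamma$-covering by \eqref{eq Nm and otimes}, leaving the reader to supply the justification --- your shearing isomorphism
\[
\phi\colon\Jac_X\times\ol\Prym_\beta(q)\;\xrightarrow{\ \sim\ }\;\ol\Jac_C(q)\times_{\Nm_\beta,\Jac_X,[n]}\Jac_X,\qquad (L,\Gg)\mapsto(\beta^*L\otimes\Gg,\,L),
\]
exhibits $\eta$ outright as the base change along $\Nm_\beta$ of the isogeny $[n]\colon\Jac_X\to\Jac_X$, hence as a finite \'etale $\Gamma$-torsor; this is exactly the missing justification. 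Two further points you make explicit that the paper glosses over are welcome: the freeness of the anti-diagonal $\Gamma$-action on $\Jac_X\times\ol\Prym_\beta(q)$ (free because it is free by translation on the $\Jac_X$-factor, independently of whether $\Gamma$ acts freely on $\ol\Prym_\beta(q)$ --- so the lemma holds in the endoscopic case too), and the computation of the fibres of $\chi$ as $\Jac_X/\mathrm{Stab}_\Gamma(\Gg)$, which cleanly yields both smoothness and the isogeny to $\Jac_X$. Your parenthetical about $\gamma$ versus $\gamma^{-1}$ in the anti-diagonal action is a harmless change of convention (compose with the inversion automorphism of $\Gamma$) and does not affect the quotient. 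In short: same core idea, but your write-up is a more complete and self-contained proof.
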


\begin{proof}
Consider 
\[
\xymatrix{
& & \ol\Jac_C(q) \ar[d]^{\Nm_\beta} 
\\
\Jac_X \ar[rr]_{[n]} & & \Jac_X,
}
\]
where $[n]$ is given by considering the $n$-th power, and it is an \'etale morphism. Since \'etaleness is stable under base change, and $[n] :  \Jac_X \to \Jac_X$ is an \'etale morphism, one has that the associated fibre product is \'etale over $\ol\Jac_C(q)$. Furthermore, this fibre product is isomorphic $\Jac_X \times \ol\Prym_\beta(q)$, via
\[
\map{\Jac_X \times_{\Jac_X} \overline{\Jac}_C(q)}{\Jac_X \times \ol{\Prym}_\beta(q)}{(L,\Gg)}{(L, \beta^*L^{-1} \otimes \Gg).}{}
\]
Considering the composition of both maps, one gets the \'etale morphism
\begin{equation} \label{eq etale map over olJac}
\morph{\Jac_X \times \ol{\Prym}_\beta(q)}{\overline{\Jac}_C(q)}{(L,\Ff)}{\beta^*L \otimes \Ff.}{}{\eta}
\end{equation}
As the action of $\Gamma$ is faithful on $\Jac_X$, it acts faithfully on the source, so $\eta$ is which is an \'etale $\Gamma$-covering after \eqref{eq Nm and otimes}. Hence, $\eta$ factors through the quotient proving the first statement. The second statement follows easily from this description.
\end{proof}

This allow us to obtain an analogous description to that of Theorem \ref{tm fine comp Jacobians}.

\begin{corollary} \label{co description of fine comp Pryms}
Let $C$ be a curve as in \eqref{dagger}, $X$ a smooth curve, $q$ a general polarization on $C$ and $\beta : C \to X$ a finite flat morphism with non-empty ramification. Then $\ol\Prym_\beta(q)$ 
\begin{itemize}
\item is a connected reduced projective scheme of finite type with locally complete intersection singularities;
\item has trivial dualizing sheaf and is therefore Gorenstein;
\item has smooth locus $\Prym_\beta(q) = \ol\Prym_\beta(q)^{sm} \subset \ol\Prym_\beta(q)$ parametrizing line bundles on $C$.
\end{itemize}
\end{corollary}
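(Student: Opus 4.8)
The plan is to deduce everything from the étale $\Gamma$-covering $\eta : \Jac_X \times \ol{\Prym}_\beta(q) \to \ol{\Jac}_C(q)$ produced in Lemma~\ref{lm decomposition of olJac_C}, by pulling back the conclusions of Theorem~\ref{tm fine comp Jacobians} to the product and then descending them to the factor $\ol{\Prym}_\beta(q)$, using that $\Jac_X$ is a smooth connected variety (an abelian variety). Since $\eta$ is étale, hence smooth of relative dimension zero and flat, the properties ``reduced'', ``locally complete intersection'' and ``trivial dualizing sheaf'' pull back from $\ol{\Jac}_C(q)$ to $\Jac_X \times \ol{\Prym}_\beta(q)$. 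To pass from the product to $\ol{\Prym}_\beta(q)$: reducedness descends because the projection $p_2 : \Jac_X \times \ol{\Prym}_\beta(q) \to \ol{\Prym}_\beta(q)$ is faithfully flat, so $\Oo_{\ol{\Prym}_\beta(q)}$ embeds into the reduced sheaf $p_{2,*}\Oo$; the locally-complete-intersection property descends because the inclusion of $\ol{\Prym}_\beta(q)$ as a fibre of $p_1$ into $\Jac_X \times \ol{\Prym}_\beta(q)$ is a regular embedding — cut out, étale-locally, by the pullback of a regular system of parameters at the corresponding point of the smooth variety $\Jac_X$ — so that $\ol{\Prym}_\beta(q) \to \Spec k$ is a composite of two local-complete-intersection morphisms; and for the dualizing sheaf, $p_2^*\omega_{\ol{\Prym}_\beta(q)} = \omega_{\Jac_X\times\ol{\Prym}_\beta(q)} = \eta^*\omega_{\ol{\Jac}_C(q)} = \Oo$ (using $\omega_{\Jac_X}\cong\Oo$ in the first equality), whence $\omega_{\ol{\Prym}_\beta(q)}\cong\Oo$ by restricting along any section of $p_2$; being a local complete intersection, $\ol{\Prym}_\beta(q)$ is then automatically Gorenstein.

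Next I would identify the smooth locus. As $\eta$ is étale, $\mathrm{Sing}(\Jac_X\times\ol{\Prym}_\beta(q)) = \eta^{-1}(\mathrm{Sing}\,\ol{\Jac}_C(q)) = \eta^{-1}(\ol{\Jac}_C(q)\setminus\Jac_C(q))$ by Theorem~\ref{tm fine comp Jacobians}; and since $\eta(L,\Ff)=\beta^*L\otimes\Ff$ is locally free precisely when $\Ff$ is, we have $\eta^{-1}(\Jac_C(q)) = \Jac_X\times\Prym_\beta(q)$. On the other hand $\mathrm{Sing}(\Jac_X\times\ol{\Prym}_\beta(q)) = \Jac_X\times\mathrm{Sing}(\ol{\Prym}_\beta(q))$ because $\Jac_X$ is smooth. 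Comparing, $\mathrm{Sing}(\ol{\Prym}_\beta(q)) = \ol{\Prym}_\beta(q)\setminus\Prym_\beta(q)$, i.e.\ the smooth locus is exactly the locally free locus $\Prym_\beta(q)$. Projectivity and finiteness of type are then immediate: $\ol{\Prym}_\beta(q) = \Nm_\beta^{-1}(\Oo_X)$ is a closed subscheme of the projective scheme $\ol{\Jac}_C(q)$.

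The only assertion that does not descend formally — and the one I expect to be the main obstacle — is connectedness, precisely because (as the endoscopic case shows) $\ol{\Prym}_\beta(q)$ may fail to be irreducible, so $\pi_0$ must be controlled by hand. The first half is quick: since $\ol{\Jac}_C(q)$ is connected, reduced and proper, $h^0(\Oo_{\ol{\Jac}_C(q)})=1$, while on the other hand $H^0(\Oo_{\ol{\Jac}_C(q)}) = H^0(\Oo_{\Jac_X\times\ol{\Prym}_\beta(q)})^{\Gamma} = (k^{\#\pi_0(\ol{\Prym}_\beta(q))})^{\Gamma}$ by Künneth and $h^0(\Oo_{\Jac_X})=1$, with $\Gamma$ acting through its permutation of the connected components of $\ol{\Prym}_\beta(q)$; hence $\Gamma$ acts transitively on $\pi_0(\ol{\Prym}_\beta(q))$. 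It remains to show this permutation action is trivial, which together with transitivity forces a single component. I would prove this by noting that the $\Gamma$-action on $\ol{\Prym}_\beta(q)$ is tensorization by the degree-zero line bundles $\beta^*\gamma \in \beta^*\Gamma \subseteq \Prym_\beta(q)$, and arguing that $\beta^*\gamma$ in fact lies in the connected component of $\Prym_\beta(q)$ containing $\Oo_C$ — whose closure acts on $\ol{\Prym}_\beta(q)$ preserving every connected component. A variant: decompose $\eta_*\Oo_{\Jac_X\times\ol{\Prym}_\beta(q)} = \bigoplus_{\chi\in\wh{\Gamma}}L_\chi$ into line bundles on $\ol{\Jac}_C(q)$ and reduce connectedness to the vanishing $H^0(\ol{\Jac}_C(q),L_\chi)=0$ for $\chi\neq 0$, which should follow by identifying each nontrivial $L_\chi$ with a nonzero element of $\Pic^0(\ol{\Jac}_C(q))\cong\Jac_C$ (via the Melo--Rapagnetta--Viviani description recalled above) and invoking the usual vanishing on the abelian variety $\Jac_C$. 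Either way the crux is reducing connectedness to a statement living on an abelian variety, and that reduction is where the genuine work lies.
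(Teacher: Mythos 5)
Your strategy --- pull back along the étale covering $\eta$ and then descend along the smooth projection $p_2$ --- matches the paper's one-sentence proof, and your detailed descent of reducedness, the lci property, the trivial dualizing sheaf, and the identification of the smooth locus is correct and supplies details the paper omits.

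There is, however, a genuine gap in the connectedness argument, exactly where you flag it. The transitivity of the $\Gamma$-action on $\pi_0(\ol\Prym_\beta(q))$, deduced from $h^0(\Oo_{\ol\Jac_C(q)})=1$ and the $\Gamma$-torsor structure of $\eta$, is sound; but neither of your proposed completions closes the loop. Your first variant requires $\beta^*L_\gamma$ to lie in the identity component of $\Prym_\beta(q)$. Yet precisely when $C$ is normalised by an unramified cover --- the endoscopic case, which Corollary~\ref{co description of fine comp Pryms} does not exclude --- Hausel--Pauly show $\Prym_\beta$ is disconnected, and that disconnectedness is produced by $\beta^*\Gamma$ leaving the identity component, so the premise fails exactly when it is needed. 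What one actually needs is a statement about components of $\ol\Prym_\beta(q)$ rather than of $\Prym_\beta(q)$: the compactification must reconnect the pieces of the open Prym that $\beta^*\Gamma$ permutes, a boundary statement analogous to, but not a formal consequence of, the Melo--Rapagnetta--Viviani theorem that $\ol\Jac_C(q)$ is connected even though $\Jac_C(q)$ is a disjoint union of copies of $\Jac_C$. Your second variant is circular as stated: for a $\Gamma$-torsor, the number of characters $\chi$ with $L_\chi\cong\Oo_{\ol\Jac_C(q)}$ equals $\#\pi_0(\Jac_X\times\ol\Prym_\beta(q))$, so the vanishing $H^0(L_\chi)=0$ for $\chi\neq 0$ is equivalent to the connectedness you are trying to prove and needs an independent input (say, an explicit identification of $L_\chi$ under $\Psi$ with a nontrivial point of $\Pic^0(\ol\Jac_C(q))$, which you gesture at but do not carry out). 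To be fair, the paper itself offers no argument for connectedness beyond invoking the étale map, so the terseness of the proof on this point is shared.
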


\begin{proof}
The three statements follow from the existence of the \'etale map \eqref{eq etale map over olJac} and the corresponding statements of Theorem \ref{tm fine comp Jacobians}.
\end{proof}

The action of $\Jac_C$ on $\ol\Jac_C(q)$ produces, for all $L \in \Jac_C$, an automorphism of the compactified Jacobian. For any $L_\gamma \in \Gamma$, we denote the associated automorphism by
\[
\morph{\ol{\Jac}_C(q)}{\ol{\Jac}_C(q)}{\Ff}{\Ff \otimes L_\gamma.}{\cong}{\tau_\gamma}
\]
After \eqref{eq Nm and otimes}, tensoring by $L_\gamma$ preserves $\ol\Prym_\beta(q)$, the kernel of the Norm map. Then, $\tau_\gamma$ preserves $\ol \Prym_\beta(q)$, thus defining an automorphism of it. To study the fixed point set $\ol \Prym_\beta(q)^{\tau_\gamma}$, we first describe when it is non-empty. 

\begin{lemma} \label{lm fixed points Jac and Prym}
Under the same hypothesis of Lemma \ref{lm decomposition of olJac_C} and Corollary \ref{co description of fine comp Pryms}, 
\[
\ol \Prym_\beta(q)^{\tau_\gamma} = \ol \Jac_C(q)^{\tau_\gamma} \cap \ol \Prym_\beta(q).
\]
%the fixed point set $\ol \Prym_\beta(q)^{\tau_\gamma}$ is empty if and only if the fixed point set $\ol \Jac_C(q)^{\tau_\gamma}$ is.
\end{lemma}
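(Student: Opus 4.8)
The plan is to reduce both implications to the description of $\ol\Jac_C(q)$ as the quotient $\left(\Jac_X \times \ol\Prym_\beta(q)\right)/\Gamma$ furnished by Lemma \ref{lm decomposition of olJac_C}; in particular I will use that, by surjectivity of the \'etale map $\eta$ of \eqref{eq etale map over olJac}, every sheaf $\Gg \in \ol\Jac_C(q)$ can be written as $\beta^*L \otimes \Ff$ for some $L \in \Jac_X$ and some $\Ff \in \ol\Prym_\beta(q)$.

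One implication is immediate and needs nothing new: since $L_\gamma \in \Gamma \subset \Prym_\beta(q)$, the automorphism $\tau_\gamma$ restricts from $\ol\Jac_C(q)$ to $\ol\Prym_\beta(q)$ (as already observed before the statement), so any fixed point of $\tau_\gamma$ on $\ol\Prym_\beta(q)$ is \emph{a fortiori} a fixed point on $\ol\Jac_C(q)$; contrapositively, $\ol\Jac_C(q)^{\tau_\gamma}=\emptyset$ forces $\ol\Prym_\beta(q)^{\tau_\gamma}=\emptyset$.

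For the converse I would argue as follows. Suppose $\Gg \in \ol\Jac_C(q)^{\tau_\gamma}$, i.e. $\Gg \otimes L_\gamma \cong \Gg$, and write $\Gg \cong \beta^*L \otimes \Ff$ as above. Because $\beta$ is ramified, $L_\gamma \in \Gamma$ is of the form $\beta^*M_\gamma$ for a unique $M_\gamma \in \Jac_X[n]$, so $\Nm_\beta(L_\gamma) = M_\gamma^{\otimes n} = \Oo_X$ and hence $\Ff \otimes L_\gamma$ again lies in $\ol\Prym_\beta(q)$. Now $\beta^*L$ is an \emph{invertible} sheaf on $C$ (this is where smoothness of $X$ enters), so tensoring by $(\beta^*L)^{-1}$ is an autoequivalence of coherent sheaves on $C$, and the isomorphism $\Gg \otimes L_\gamma \cong \Gg$ transforms into $\Ff \otimes L_\gamma \cong \Ff$. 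Thus $\Ff \in \ol\Prym_\beta(q)^{\tau_\gamma}$, which is therefore non-empty. The same computation in fact yields the sharper statement $\ol\Jac_C(q)^{\tau_\gamma} = \eta\big(\Jac_X \times \ol\Prym_\beta(q)^{\tau_\gamma}\big)$, exhibiting the two fixed loci as related by the \'etale $\Gamma$-quotient $\eta$.

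I do not expect a serious obstacle here: the only two points one must not skip are that $\beta^*L$ is genuinely a line bundle, so that tensoring by it is invertible at the level of sheaves and not merely at the level of isomorphism classes of points, and that $L_\gamma$ has trivial Norm, so that tensoring by it preserves $\ol\Prym_\beta(q)$; both follow at once from $X$ being smooth and from $\Gamma = \Jac_X[n]$ being realized inside $\beta^*\Jac_X$ through the injective map $\beta^*$.
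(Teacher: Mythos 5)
Your proposal is correct and proceeds along essentially the same lines as the paper: both arguments start from the decomposition of $\ol\Jac_C(q)$ supplied by Lemma \ref{lm decomposition of olJac_C}, writing a fixed sheaf $\Gg$ as $\beta^*L \otimes \Ff$ and concluding that $\Ff$ is itself $\tau_\gamma$-fixed. Where the paper phrases this via the $\Gamma$-quotient description (invoking faithfulness of $\Gamma$ on $\beta^*\Jac_X$ to kill the $\Jac_X$-component of any identification), you cancel the line bundle $\beta^*L$ directly, a small variant of the same idea that is equally valid.
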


\begin{proof}
The proof is straight-forward.
%We just need to check the $\supset$ statement. Using \eqref{eq description of J_C in terms of Prym}, suppose that $[(L,\Ff)]_\Gamma$ belongs to $\ol \Jac_C(q)^{\tau_\gamma}$. Since $\Gamma$ acts faithfully on $\beta^*\Jac_X$, one has that $\Ff$ is forcely fixed by $\tau_\gamma$, giving a point in $\ol \Prym_\beta(q)^{\tau_\gamma}$.
\end{proof}

In order to describe the fixed points of $\tau_\gamma$, recall that, associated to every $L_\gamma \in \Gamma = \Jac_X[n]$ of order $n$, there exists an $n:1$ \'etale map $\pi : \Sigma_\gamma \to X$ with Galois group $\langle L_\gamma \rangle$. 

\begin{proposition} \label{pr fixed points of tau_gamma}
Let $C$ be a curve as in \eqref{dagger} and denote by $C_1, \dots, C_n$ its irreducible components. Suppose that our curve is equipped with a finite flat $n:1$ morphism $\beta : C \to X$ with ramifitacion, inducing a ramified cover $\beta_i : C_i \to X$ of order $\ell_i$. Consider a non-trivial element $L_\gamma \in \Gamma$ and the associated automorphism $\tau_\gamma$ of $\ol\Jac_C(q)$. Then either:
\begin{itemize}
    \item the fixed point set $\ol{\Jac}_C(q)^{\, \tau_\gamma}$ is empty, or: 
    \item each $C_i$ is normalized by $\Sigma_{\gamma_i}$, for some $\gamma_i \in \Jac_X[\ell_i]$, of order $\ell_i$, is such that $L_{\gamma} \in \langle L_{\gamma_i} \rangle$.
\end{itemize}

In the non-empty case, the fixed point set $\ol\Jac_C(q)^{\tau_\gamma}$ corresponds to those sheaves obtained by push--forward under a generically $1:1$ finite morphism $\nu : \wt C \to C$, where $\wt C$ is a curve (possibly non-connected) whose irreducible components are $\Sigma_{\gamma_i}$. Moreover $\wt \beta = \beta \circ \nu : \wt C \to X$ is an $n:1$ cover.
\end{proposition}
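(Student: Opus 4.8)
The plan is to analyze a fixed point $\Ff \in \ol\Jac_C(q)^{\tau_\gamma}$ component by component, using the spectral-type correspondence between $\tau_\gamma$-fixed sheaves on $C$ and sheaves pushed forward from a cover determined by $L_\gamma$. First I would recall the classical picture on a smooth curve: if $M$ is a line bundle of exact order $d$ on a curve $Y$, then $M \otimes \Ff \cong \Ff$ forces $\Ff$ to be the pushforward $\rho_* \Gg$ along the cyclic étale $d:1$ cover $\rho : Y_M \to Y$ associated to $\langle M \rangle$, because the isomorphism $M\otimes\Ff\cong\Ff$ equips $\Ff$ with an action of the group scheme $\mu_d \cong \langle M\rangle^\vee$, equivalently an $\Oo_{Y_M}$-module structure, and torsion-freeness of rank $1$ descends to rank $1$ torsion-freeness on $Y_M$. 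The subtlety here is that $C$ is reducible and possibly singular, so I would run this argument on the normalization of each irreducible component $C_i$ (or on $C_i$ itself), keeping track of how $\tau_\gamma$ permutes the components.

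Next I would set up the component bookkeeping. Write $\Ff \in \ol\Jac_C(q)^{\tau_\gamma}$; because $\Ff$ has rank $1$ on each component and $L_\gamma$ is a pullback $\beta^* L_\gamma^X$ with $L_\gamma^X \in \Jac_X[n]$, restricting to $C_i$ gives $\Ff|_{C_i}$, a rank-$1$ torsion-free sheaf with $\beta_i^* L_\gamma^X \otimes \Ff|_{C_i} \cong \Ff|_{C_i}$ (modulo torsion supported at the nodes joining $C_i$ to the rest of $C$ — this is where I must be careful and pass to the biggest torsion-free quotient or work on $C_i$ directly). Now $\beta_i : C_i \to X$ has degree $\ell_i$, and $\beta_i^* L_\gamma^X$ has some order $d_i \mid \ell_i$ on $C_i$; the previous paragraph forces $C_i$ (or its normalization) to be dominated by the cyclic cover $\Sigma_{\gamma'_i} \to X$ of order exactly $\ell_i$, where $\gamma'_i$ is a lift of the relevant torsion class with $L_\gamma \in \langle L_{\gamma'_i}\rangle$ — this is exactly the stated constraint, and if no such lift makes $C_i$ dominated by an étale cover of $X$, the fixed locus is empty. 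Assembling the $\Sigma_{\gamma'_i}$ (renamed $\Sigma_{\gamma_i}$ after matching up orders) into $\wt C = \bigsqcup_i \Sigma_{\gamma_i}$, the universal property of normalization and the compatibility of the $\beta_i$ with $\beta$ produce the generically $1:1$ map $\nu : \wt C \to C$ and the cover $\wt\beta : \wt C \to X$ with $\wt\beta = \beta \circ \nu$, and every $\tau_\gamma$-fixed sheaf is $\nu_* \Gg$ for a suitable $\Gg$ on $\wt C$.

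The main obstacle I anticipate is the interaction between the singularities of $C$ and the descent of the $\mu_n$-action: on a reducible nodal-type curve the isomorphism $L_\gamma \otimes \Ff \cong \Ff$ is only canonical up to the torsion at the singular points, so lifting it to an honest $\Oo_{\wt C}$-module structure (and showing $\nu$ is well-defined globally rather than just on each component) requires either a local analysis at each node of $C$ — checking that $\tau_\gamma$-invariance of $\Ff$ propagates across nodes and pins down how $\wt C$ is glued, or is left disconnected — or invoking the planarity and local complete intersection hypotheses from \eqref{dagger} and Corollary \ref{co description of fine comp Pryms} to control the local structure. A secondary technical point is matching the orders: one must show the induced cover of $C_i$ can be taken of order exactly $\ell_i$ (not a proper divisor), which is where the hypothesis that $\beta$ is flat and $C$ has the stated form is used; and finally one should note, via Lemma \ref{lm fixed points Jac and Prym}, that the same description transports to $\ol\Prym_\beta(q)^{\tau_\gamma}$, so the non-endoscopic hypothesis is precisely the vanishing of all these fixed loci.
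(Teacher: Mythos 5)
Your plan shares the paper's overall strategy — analyse a $\tau_\gamma$-fixed sheaf $\Ff$ component by component via a spectral-type correspondence — and you correctly flag the two technical pressure points, but both are handled in the paper by a device your plan lacks, and the device you substitute does not quite work. The $\mu_d$-action picture is the first issue: an isomorphism $\alpha:\Ff\to\Ff\otimes\beta^*L_\gamma$ does not by itself equip $\Ff$ with a $\mu_d$-action (equivalently an $\Oo_{Y_M}$-module structure for the cyclic cover of order $d=\mathrm{ord}(L_\gamma)$), because $\alpha^d:\Ff\to\Ff$ is an automorphism that need not be the identity. The paper avoids this entirely by treating $(\Ff,\alpha)$ as a $\Sym^\bullet(\beta^*L_\gamma)^{-1}$-module, i.e.\ a sheaf $\wt\Ff$ on $\tot(\beta^*L_\gamma)$, and defining $\wt C:=\Supp\wt\Ff$ globally, with $\nu$ the restriction of the bundle projection and "generically $1:1$" coming from $\Ff$ having rank $1$. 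Because $\wt C$ is built once, globally, the gluing problem you name as your main obstacle simply does not arise: the subcurves $\wt C_i=\Supp\wt\Ff_i$ and their incidences are inherited from $\wt C$, not assembled from a disjoint union afterwards. Note also that the paper uses the torsion-free subsheaf $\Ff_i=\ker(\Ff\to\Ff|_{\hat C_i})$ rather than the restriction $\Ff|_{C_i}$, which removes the torsion-at-the-nodes caveat you mention.

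The second gap — matching the order $\ell_i$ and identifying $\wt C_i$ with a smooth cyclic cover, which you acknowledge you do not resolve — is closed in the paper by pushing forward to $X$: since $\Ff_i$ is torsion-free, $\beta_{i,*}\Ff_i$ is a rank-$\ell_i$ vector bundle on $X$ and $\beta_{i,*}\alpha_i$ is an isomorphism to its twist by $L_\gamma$, so the associated $\Oo_{\tot(L_\gamma)}$-module $\Ff'_i$ has a degree-$\ell_i$ spectral curve as support that misses the zero section precisely because $\beta_{i,*}\alpha_i$ is invertible. This forces the support to be an unramified $\ell_i:1$ cover $\Sigma_{\gamma'_i}$ with $L_\gamma\in\langle L_{\gamma'_i}\rangle$, and the induced map $\beta'_i:\wt C_i\to\Sigma_{\gamma'_i}$ is a generically $1:1$ morphism between finite $\ell_i:1$ covers of $X$ whose target is smooth and irreducible, hence an isomorphism. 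Without this pushforward step there is no mechanism in your plan to produce the exact degree $\ell_i$ or to identify $\wt C_i$ with a smooth cover, so the gaps you flag are genuine and need the total-space construction together with the pushforward argument, not the $\mu_d$-action.
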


\begin{proof}
First, let us suppose that there exists $\wt C$ and $\wt \beta$ as in the hypothesis. Note that $L_\gamma \cong L_{\gamma'}^{\otimes t}$ for some $t$. This implies that the pull-back of $L_\gamma$ to $\Sigma_{\gamma'}$ is trivial, and so is its pull-back to $\wt C$. Therefore, as a direct consequence of projection formula and this fact, the sheaves obtained by pushing--forward from $\wt C$ are fixed by tensorization under $L_\gamma$.

Let us now show that all fixed points are obtained as pushforward under some $\nu$ as above. A fixed point under $\tau_\gamma$ is ($q$-stable) sheaf $\Ff \in \ol{\Jac}_C(q)$ for which there exists an isomorphism $\alpha : \Ff \stackrel{\cong}{\longrightarrow} \Ff \otimes \beta^*L_\gamma$. The pair $(\Ff,\alpha)$ gives rise to a sheaf of $\Oo_{\tot(\beta^*L_\gamma)}$-modules $\wt \Ff$, and denote its support by $\wt C \subset \tot(\beta^*L_\gamma)$, and by $\nu : \wt C \to C$ the projection naturally obtained from the structural morphism of $\beta^*L_\gamma$. %Since $\alpha$ is an isomorphism, none of its eigenvalues vanish, so $\wt C$ does not intersect the zero section of $\tot(\beta^*L_\gamma)$.
Since $\Ff$ is a rank $1$ torsion-free sheaf, $\wt C$ is generically $1:1$ onto $C$. It remains to be shown that the irreducible components of $\wt C$ are of the form $\Sigma_{\gamma_i}$. 
	
Given an irreducible component $C_i$ of $C$	we denote by $\hat C_i$ the subcurve obtained by the union of all the irreducible components of $C$ except $C_i$. We denote as well $\Ff_i$ to be the kernel of the projection $\Ff \to \Ff|_{\hat C_i}$. This is a torsion-free subsheaf of $\Ff$ supported on the irreducible component $C_i$. The isomorphism $\alpha$ induces $\alpha_i : \Ff_i \stackrel{\cong}{\to} \Ff_i \otimes \beta_i^* L_\gamma$, where we denote by $\beta_i : C_i \to X$ the restriction of $\beta$ to $C_i$. Consider as well the sheaf of $\Oo_{\tot(\beta_i^*L_\gamma)}$-modules $\wt \Ff_i$ obtained from $(\Ff_i, \alpha_i)$, and observe that $\wt \Ff_i$ is a subsheaf of $\wt \Ff$ supported on a subcurve $\wt C_i$ of $\wt C$, equipped with a generically $1:1$ morphism $\nu_i : \wt C_i \to C_i$. Hence, $\wt C_i$ is an irreducible component of $\wt{C}$ which is naturally equipped with the $\ell_i:1$ covering $\wt \beta_i = \beta_i \circ \nu_i : \wt C_i \to X$. The isomorphism $\alpha_i$ is obtained from pushing-forward the morphism $\lambda_i : \wt \Ff_i \to \wt \Ff_i \otimes \beta_i^*L_\gamma$ obtained by tensorization with the tautological section of $\beta_i^*L_\gamma$.

Since $\Ff_i$ is torsion-free, by projection formula, $\alpha_i : \Ff_i \stackrel{\cong}{\to} \Ff_i$ gives rise to the isomorphism $\beta_{i,*}\alpha_i : \beta_{i,*}\Ff_i \stackrel{\cong}{\to} \beta_{i,*} \Ff_i \otimes L_\gamma$, of rank $\ell_i$ vector bundles over $X$. Once more, the pair $(\beta_{i,*}\Ff_i, \beta_{i,*}\alpha_i)$ provides a $\Oo_{\tot(L_\gamma)}$-module $\Ff'_i$, whose support does not intersect the $0$-section $X \subset \tot(L_\gamma)$ as $\beta_{i,*}\alpha_i$ is an isomorphism. Then, the support of $\Ff'_i$ is an unramified $\ell_i:1$ cover of $X$ where the pull-back of $L_\gamma$ trivializes, {\it i.e.} $\Ff'_i$ is supported on $\Sigma_{\gamma_i}$, for some $\gamma_i \in \Jac_X[\ell_i]$ of order $\ell_i$ with $L_\gamma \in \langle L_{\gamma_i} \rangle$. 

Observe that the $\ell_i:1$ covering $\beta_i : C_i \to X$ lifts naturally to $\beta'_i : \tot(\beta_i^*L_\gamma) \to \tot(L_\gamma)$, which we denote with the same symbol by abuse of notation. Let us denote by $\pi_i$ the structural morphism $\tot(L_\gamma) \to X$ and observe that it gives rise to a commuting diagram since $\pi_i \circ \beta'_i = \beta_i \circ \nu_i$. Recall that $\Sigma_{\gamma_i} $ is determined by $\beta_{i,*} \alpha_i = \beta_{i,*} \nu_{i,*} \lambda_i = \pi_{i,*} \beta'_{i,*}\lambda_i$. It then follows that $\beta'_i$ sends $\wt C_i$ to $\Sigma_{\gamma_i}$. Both $\wt{C}_i$ and $\Sigma_{\gamma_i}$ are finite $\ell_i:1$-covers of $X$ and $\beta'_i: \wt{C}_i \to \Sigma_{\gamma_i}$ commutes with the coverings, it is generically $1:1$. Since $\Sigma_{\gamma_i}$ is smooth and irreducible, it then follows that $\wt C_i \cong \Sigma_{\gamma_i}$.   
\end{proof}

\begin{remark}
We recall from Hausel--Pauly \cite{hausel&pauly} that whenever an irreducible curve is normalised by an unramified cover, the Prym variety $\Prym_\beta$ is non-connected. Hence, $\ol\Prym_\beta$ is reducible even if $\ol \Jac_C$ is irreducible.
\end{remark}

In view of the above, and inspired by \cite{ngo}, \cite{frenkel&witten} and \cite{hausel&pauly}, we say that the finite flat $n:1$ morphism $\beta : C \to X$ is of {\it endoscopic type} if the $\ol \Prym_\beta(q)^{\tau_\gamma}$ is non-empty for some $\gamma \in \Gamma$. If such condition is not satisfied, we say that the curve $C$ is of {\it non-endoscopic type}. Note that curves on non-endoscopic type are generic.

We study the properties of quotient $\ol \Prym_\beta(q)/\Gamma$ in the last case.

\begin{proposition} \label{pr quotient prym}
Suppose that $C$ is a curve as in \eqref{dagger} equipped with a finite flat $n:1$ cover $\beta : C \to X$ with non-empty ramification and of non-endoscopic type. Consider a general polarization $q$ on $C$. Then
\begin{itemize}
\item the projection 
\[
\pi : \ol \Prym_\beta(q) \longrightarrow \ol \Prym_\beta(q)/\Gamma,
\]
is an \'etale morphism; 

\item $\ol \Prym_\beta(q)/\Gamma$ is a connected reduced projective scheme of finite type;
\item $\ol \Prym_\beta(q)/\Gamma$ is Gorenstein.
\end{itemize}
\end{proposition}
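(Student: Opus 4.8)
The plan is to derive all three properties of the quotient $\ol\Prym_\beta(q)/\Gamma$ from the fact, established in the first bullet point, that $\pi$ is étale — which in turn follows immediately from freeness of the $\Gamma$-action (the non-endoscopic hypothesis guarantees $\ol\Prym_\beta(q)^{\tau_\gamma} = \emptyset$ for all nontrivial $\gamma$, so $\Gamma$ acts freely on the projective scheme $\ol\Prym_\beta(q)$, and a free action of a finite group on a quasi-projective scheme over $k$ admits a geometric quotient which is again quasi-projective, with $\pi$ finite étale of degree $|\Gamma|$). Once $\pi$ is known to be finite étale, the strategy is simply to transfer the properties recorded in Corollary~\ref{co description of fine comp Pryms} across $\pi$, exploiting that all the relevant notions (reducedness, local complete intersection, being Gorenstein, triviality of the dualizing sheaf) are étale-local, together with the compatibility of dualizing complexes under étale morphisms.

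First I would argue projectivity and connectedness. Since $\ol\Prym_\beta(q)$ is projective (Corollary~\ref{co description of fine comp Pryms}) and $\pi$ is a finite surjective morphism, $\ol\Prym_\beta(q)/\Gamma$ is proper; being a geometric quotient of a quasi-projective scheme by a finite group it is quasi-projective, hence projective. For connectedness, note $\pi$ is surjective and continuous and $\ol\Prym_\beta(q)$ is connected (Corollary~\ref{co description of fine comp Pryms}), so the image is connected. Finite type over $k$ is inherited since $\ol\Prym_\beta(q) \to \ol\Prym_\beta(q)/\Gamma$ is finite and surjective with $\ol\Prym_\beta(q)$ of finite type. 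For reducedness: an étale morphism is flat with reduced (indeed regular) fibres, so $\pi$ is smooth of relative dimension $0$; since $\ol\Prym_\beta(q)$ is reduced and $\pi$ is faithfully flat, étale descent of reducedness gives that $\ol\Prym_\beta(q)/\Gamma$ is reduced. The same descent argument applies to the local complete intersection property (equivalently, since $\pi$ is étale, the local rings of $\ol\Prym_\beta(q)/\Gamma$ and $\ol\Prym_\beta(q)$ have isomorphic completions along matching maximal ideals, so l.c.i.\ is preserved).

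For the last bullet, the key input is that for an étale morphism $\pi: Y \to Z$ of finite-type $k$-schemes one has $\pi^! \cong \pi^*$ on dualizing complexes, so in particular $\pi^*\omega_{Z} \cong \omega_{Y}$ where $\omega$ denotes the dualizing sheaf. Since $\ol\Prym_\beta(q)$ is Gorenstein with $\omega_{\ol\Prym_\beta(q)} \cong \Oo$ (Corollary~\ref{co description of fine comp Pryms}), we get $\pi^*\omega_{\ol\Prym_\beta(q)/\Gamma} \cong \Oo_{\ol\Prym_\beta(q)}$. Because $\pi$ is finite faithfully flat, and the property of being an invertible sheaf, and more generally the isomorphism class of a coherent sheaf pulled back along a faithfully flat quasi-compact morphism, descends, we conclude $\omega_{\ol\Prym_\beta(q)/\Gamma}$ is invertible — hence $\ol\Prym_\beta(q)/\Gamma$ is Gorenstein — and moreover it is trivial: indeed one can either invoke $\Gamma$-equivariant descent (the dualizing sheaf of $\ol\Prym_\beta(q)$ carries a canonical $\Gamma$-equivariant structure, and the trivialization $\omega_{\ol\Prym_\beta(q)} \cong \Oo$ can be chosen $\Gamma$-invariant up to the fact that $\Gamma$ acts on a one-dimensional space, which it must do trivially after possibly adjusting, since $\Gamma$ acts by translations on the Prym and translations act trivially on global differential forms of top degree), or observe directly that $\omega_{\ol\Prym_\beta(q)/\Gamma}$ is a line bundle whose pullback to $\ol\Prym_\beta(q)$ is trivial, and then use that $\pi$ has a canonical splitting on structure sheaves via $\frac{1}{|\Gamma|}\mathrm{tr}$ (characteristic zero) to produce a nowhere-vanishing section downstairs from the $\Gamma$-invariant nowhere-vanishing section upstairs.

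The main obstacle I anticipate is purely the last point — pinning down that the trivialization of the dualizing sheaf descends to a \emph{trivialization}, not merely to an invertible sheaf. The cleanest route is $\Gamma$-equivariant descent: $\omega_{\ol\Prym_\beta(q)}$ has a natural $\Gamma$-linearization coming from functoriality of dualizing complexes (each $\tau_\gamma$ is an automorphism, hence $\tau_\gamma^!\omega \cong \omega$ canonically), and $\omega_{\ol\Prym_\beta(q)/\Gamma}$ is by construction the descent of this linearized sheaf. One must check the chosen generator of $\omega_{\ol\Prym_\beta(q)} \cong \Oo$ is $\Gamma$-invariant for this linearization: since $\Gamma$ acts on $\ol\Prym_\beta(q)$ through translations by the abelian-scheme action extended from $\Jac_C$, and translations act trivially on $H^0$ of the (trivial) canonical bundle — a standard fact for abelian varieties that extends here because $\Jac_C(q)$ is dense and the sheaf is a line bundle on a reduced scheme — the linearization is trivial, and the generator descends. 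This gives $\omega_{\ol\Prym_\beta(q)/\Gamma} \cong \Oo$, completing the proof.
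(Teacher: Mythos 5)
Your proposal is correct and follows the same approach as the paper's own (one-sentence) proof: non-endoscopicity gives a free $\Gamma$-action, hence $\pi$ is finite étale, and the remaining properties descend from Corollary~\ref{co description of fine comp Pryms}. The paper merely asserts that the second and third bullets ``follow naturally''; you correctly flag that descending \emph{triviality} (rather than mere invertibility) of the dualizing sheaf needs an argument, and your equivariant-descent reasoning — that the canonical $\Gamma$-linearization on $\omega_{\ol\Prym_\beta(q)}$ is trivial because $\Gamma$ acts by translations, which fix the generator on the dense smooth locus — is the right way to fill that in.
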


\begin{proof} 
If $\beta$ is non-endoscopic, the action of $\Gamma$ is free, so $\pi$ is \'etale. The second and third statements then follow naturally from Corollary \ref{co description of fine comp Pryms}. 
\end{proof}

\begin{remark}
In the non-endoscopic case, the projection $\chi$ is smooth. It is so since its fibres are smooth (Lemma \ref{lm decomposition of olJac_C}), and it is flat thanks to the commutativity of
\[
\xymatrix{
\Jac_X \times \ol\Prym_\beta(q) \ar[rr]^{\eta} \ar[d]_{p_2} & & \quotient{\Jac_X \times \, \ol{\Prym}_\beta(q)}{\Gamma} \ar[d]^\chi
\\
\ol{\Prym}_\beta(q) \ar[rr]_\pi & & \quotient{\ol{\Prym}_\beta(q)}{\Gamma},
}
\]
with $p_2$ being flat and the action of $\Gamma$ faithful. 
\end{remark}

\section{Fourier--Mukai transform on compactified Prym varieties} 
\label{equivariant derived category}

\subsection{The restriction of the Poincar\'e sheaf}
\label{sc constructing a kernel}

Pullback the Poincar\'e sheaf $\Pp$ on $\ol \Jac_C(q) \times \ol \Jac_C(q')$ defined in \eqref{eq def Poincare sheaf} along the embeddings $\jmath : \ol \Prym_\beta(q) \to \ol \Jac_C(q)$ and $\jmath' : \ol \Prym_\beta(q') \to \ol \Jac_C(q')$ to define the sheaf
\[
\Rr := (\jmath \times \jmath')^*\Pp.
\]
The next section takes $\Rr$ as an integral kernel to define a Fourier--Mukai transform, and this section is dedicated to the preparatory study of $\Rr$. 

Since Arinkin has proven that $\Pp$ is maximal Cohen--Macaulay, one can use homological criterion to show the following acyclicity property (see for instance \cite[Lma. 2.3]{arinkin}). 

\begin{lemma} \label{derived pullback is pullback} There is an isomorphism of sheaves
\[
(\jmath \times \jmath')^*\Pp \cong L(\jmath \times \jmath')^{*}\Pp, 
\]
so that $\Rr$ may be equivalently defined as the derived pullback. 
\end{lemma}
Next we explore the flatness of $\Rr$. 
\begin{lemma} \label{lm flatness of Rr}
The sheaf $\Rr$ over $\ol\Prym_\beta(q) \times \ol\Prym_\beta(q')$ is flat over each of the projections onto $\ol\Prym_\beta(q)$ and $\ol\Prym_\beta(q')$. 
\end{lemma}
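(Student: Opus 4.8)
The plan is to deduce the flatness of $\Rr = (\jmath \times \jmath)^*\Pp$ over each factor from the corresponding property of the Poincar\'e sheaf $\Pp$ on $\ol\Jac_C(q) \times \ol\Jac_C(q')$, using that the inclusion $\jmath : \ol\Prym_\beta(q) \into \ol\Jac_C(q)$ is, up to \'etale base change, a ``fibre-like'' closed embedding. Concretely, consider the projection $\p_1 : \ol\Jac_C(q) \times \ol\Jac_C(q') \to \ol\Jac_C(q)$ and its restriction to $\ol\Prym_\beta(q) \times \ol\Prym_\beta(q')$, which factors through the first projection of the product of Pryms. By Theorem~\ref{thm:derivedequiv} (and the discussion preceding it, following \cite{arinkin, melo2}) the sheaf $\Pp$ is flat over $\p_1$; the key point is that this flatness is preserved after base change along $\jmath$.

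First I would make precise the base-change setup. The inclusion $\jmath \times \id : \ol\Prym_\beta(q) \times \ol\Jac_C(q') \into \ol\Jac_C(q) \times \ol\Jac_C(q')$ is a base change of $\jmath$ along $\p_1$. Since $\Pp$ is flat over $\p_1$, its restriction $(\jmath \times \id)^*\Pp$ need not a priori be flat over $\ol\Prym_\beta(q)$ for an arbitrary closed embedding — but here I would exploit the product decomposition from Lemma~\ref{lm decomposition of olJac_C}: the \'etale cover $\eta : \Jac_X \times \ol\Prym_\beta(q) \to \ol\Jac_C(q)$ exhibits $\ol\Jac_C(q)$, \'etale-locally, as a product $\Jac_X \times \ol\Prym_\beta(q)$ in which $\ol\Prym_\beta(q)$ sits as a slice $\{L_0\} \times \ol\Prym_\beta(q)$. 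Pulling $\Pp$ back along $\eta \times \eta$ (or along $\eta \times \id$), flatness is preserved because $\eta$ is \'etale hence flat; and flatness over $\p_1$ of the pullback combined with the product structure means the pullback is flat in the $\Jac_X$-direction. One then checks that a sheaf on $A \times A' \times B \times B'$ that is flat over the $A$-factor and whose formation is compatible with the product structure restricts, over any slice $\{a\} \times A' \times B \times B'$, to a sheaf flat over the $\{a\} \times \{*\} \times B$-factor. Descending the \'etale cover $\eta$ then gives flatness of $\Rr$ over the projection to $\ol\Prym_\beta(q)$, and the symmetric argument handles the projection to $\ol\Prym_\beta(q')$.

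An alternative, and perhaps cleaner, route avoids the product decomposition: since $\ol\Prym_\beta(q) = \Nm_\beta^{-1}(\Oo_X)$ is the scheme-theoretic fibre of the morphism $\Nm_\beta : \ol\Jac_C(q) \to \Jac_X$ over the point $\Oo_X$, the embedding $\jmath$ is a regular embedding of codimension $\dim \Jac_X$ (as $\ol\Prym_\beta(q)$ is a fibre of a flat morphism between schemes whose local structure is controlled by Corollary~\ref{co description of fine comp Pryms} and Theorem~\ref{tm fine comp Jacobians}; indeed $\Nm_\beta$ is flat). For a regular embedding cut out by a regular sequence pulled back from the base, flatness of $\Pp$ over $\p_1$ passes to flatness of $\Rr$ over the induced projection by a local Tor-computation, using that the Koszul complex on the pulled-back regular sequence resolves $\Oo_{\ol\Prym}$ and that $\Pp$ being $\p_1$-flat kills the higher Tor's against functions pulled back from $\ol\Jac_C(q)$ in the first factor. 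Either way, I would invoke the local criterion for flatness fibrewise.

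The main obstacle I anticipate is justifying that the ``slice'' of a $\p_1$-flat sheaf is again flat over the sliced base — this is \emph{not} true for arbitrary closed subschemes, so the argument genuinely needs the special geometry: either the local product structure coming from $\eta$ in Lemma~\ref{lm decomposition of olJac_C}, or the fact that $\ol\Prym_\beta(q)$ is the fibre of the \emph{flat} morphism $\Nm_\beta$ and hence $\jmath$ is Tor-independent of $\p_1$ in the relevant sense. I would therefore spend the bulk of the proof verifying flatness of $\Nm_\beta$ (equivalently, producing the regular sequence / product chart) and then the flatness transfer is a short homological argument via the local criterion. The second projection is then entirely symmetric by the symmetry of the construction of $\Pp$ recalled after \eqref{eq def Poincare sheaf}.
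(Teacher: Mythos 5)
Your proposal correctly assembles the right ingredients (flatness of $\Nm_\beta$, the regular embedding $\jmath$, the \'etale product decomposition via $\eta$), but it aims the bulk of its effort at the wrong step. The restriction of $\Pp$ in the \emph{first} factor, $(\jmath\times\id)^*\Pp$ on $\ol\Prym_\beta(q)\times\ol\Jac_C(q')$, is automatically flat over $\ol\Prym_\beta(q)$ for \emph{any} morphism $\jmath$, because $\jmath\times\id$ is exactly the base change of $\jmath$ along $\p_1$ and flatness is stable under arbitrary base change. Your sentence ``its restriction $(\jmath\times\id)^*\Pp$ need not a priori be flat over $\ol\Prym_\beta(q)$ for an arbitrary closed embedding'' is therefore false, and the Tor-independence / Koszul machinery you build for this step is unnecessary. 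The paper opens with precisely this one-line base-change observation.

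The genuine difficulty, which your sketch never squarely addresses, is the \emph{second} restriction: passing from $\Mm:=(\jmath\times\id)^*\Pp$ on $\ol\Prym_\beta(q)\times\ol\Jac_C(q')$ to $\Rr=(\id\times\jmath)^*\Mm$ on $\ol\Prym_\beta(q)\times\ol\Prym_\beta(q')$, one quotients by an ideal living in the second factor, a direction transverse to the projection $\q_1$, and flatness over $\ol\Prym_\beta(q)$ can fail under such a quotient. The paper handles this by a local computation at a point $z=(z_1,z_2)$: the local ring of the product splits as a localisation of $\Oo_{\ol\Prym,z_1}\otimes_k\Oo_{\ol\Jac,z_2}$, the coordinates $x_1,\dots,x_\ell$ cutting out $\ol\Prym_\beta(q')$ are pulled back from $\Jac_X$ through $\Nm_\beta$ applied to the \emph{second} factor (hence act $\Oo_{\ol\Prym,z_1}$-linearly on $\Mm_z$), and the $\Oo_{\ol\Prym,z_1}$-flatness of $\Mm_z$ is exploited to write $\Mm_z\cong\Oo_{\ol\Prym,z_1}\otimes_k M$, so that $\Rr_z\cong\Oo_{\ol\Prym,z_1}\otimes_k(M/N)$ is visibly flat. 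Your Koszul/Tor-vanishing idea is in fact the right tool for this step, but it must be applied to the regular sequence coming from the second factor and one must verify that this sequence remains regular \emph{fibrewise} over $\ol\Prym_\beta(q)$; this is where the local product structure (or, equivalently, the maximal Cohen--Macaulay property of $\Pp$ recalled after \eqref{eq def Poincare sheaf}) genuinely enters. As written, your proof does not carry out that check, and so it does not close the actual gap.
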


\begin{proof}
$\Mm := (\jmath \times \id)^*\Pp$ is a sheaf over $V := \ol\Prym_\beta(q) \times \ol\Jac_C(q')$. Since $\Pp$ is flat over each of the factors, by base change $\Mm$ is flat over $\ol\Prym_\beta(q)$. Pick now a point $z \in W := \ol\Prym_\beta(q) \times \ol\Prym_\beta(q')$ and denote by $z_1 \in \ol\Prym_\beta(q)$ and $z_2 \in \ol\Prym_\beta(q')$ its image under the projection onto the first and second factors. For the duration of this proof we shall apply the contractions $\ol \Prym = \ol \Prym_{\beta}(q)$ and $\ol \Jac = \ol \Jac_C(q')$. We need to see that $\Rr_z$ is a free $\Oo_{\ol\Prym, z_1}$-module, provided $\Mm_z$ is. Observe that one has a projection $\Mm_z \to \Rr_z$, and complete it into a short exact sequence of $\Oo_{\ol\Prym, z_1}$-modules
\[
0 \longrightarrow \Ii \longrightarrow \Mm_z \longrightarrow \Rr_z \longrightarrow 0.
\]
One has that 
\[
\Oo_{V,z} \cong \Oo_{\ol\Prym, z_1} \times_k \Oo_{\ol\Jac , z_2}. 
\]
Since $\Oo_{V,z}$ can naturally be understood as a sheaf of endomorphisms of $\Mm_z$ and the later is $\Oo_{\ol\Prym, z_1}$-flat, one has the decomposition of $\Oo_{\ol\Prym,z_1}$-modules
\[
\Mm_z \cong \Oo_{\ol\Prym, z_1} \times_k M
\]
for some $\Oo_{\ol\Jac , z_2}$-module $M$. Consider $x_1, \dots, x_\ell$ to be local coordinates of $\Jac_X$ in a neighbourhood of the identity. Note that one has 
\[
k[x_1, \dots, x_\ell] \hookrightarrow \Oo_{\ol\Jac , z_2}
\]
associated to the projection $\Nm_\beta : \ol\Jac_C \to \Jac_X$ with $z_1$ mapping to the identity. One can then describe the submodule of $\Mm_z$ generated by the action of the ideal $\langle x_{1}, \dots, x_\ell \rangle$, 
\[
\Ii = \langle x_{1}, \dots, x_\ell \rangle \Mm_z.
\]
Furthermore, setting 
\[
N := \langle x_{1}, \dots, x_\ell \rangle M,
\]
one has 
\[
\Ii \cong  \Oo_{\ol\Prym,z_1} \times_k N.
\]
Then
\[
\Rr_z = \Oo_{\ol\Prym,z_1} \times_k M/N,
\]
is flat over $\Oo_{\ol\Prym, z_1}$ and we conclude the proof of flatness over the first factor. 

Considering $(\id \times \jmath')^*\Pp$, one shows mutatis mutandis that $\Rr$ is flat over the second factor too. 
\end{proof}

Following-up, we proof that $\Rr$ is Cohen--Macauly. Let us consider the open subset
\[
\left ( \ol{\Prym}_\beta(q) \times \ol{\Prym}_\beta(q) \right )^\sharp := \left ( \ol{\Prym}_\beta(q) \times \Prym_\beta \right ) \cup \left ( \Prym_\beta \times \ol{\Prym}_\beta(q) \right ),
\]
and denote the inclusion map by 
\[
\zeta : \left ( \ol{\Prym}_\beta(q) \times \ol{\Prym}_\beta(q) \right )^\sharp \into \ol{\Prym}_\beta(q) \times \ol{\Prym}_\beta(q).
\]
Observe that the restriction of $\Rr$ there,
\[
\Rr^\sharp := \zeta^*\Rr,
\]
is locally free as it can be obtained by restricting the Poincar\'e line bundle $\Pp^\sharp$. 

\begin{lemma} \label{lm Rr is CM}
$\Rr$ is a Cohen--Macaulay sheaf, satisfying
\begin{equation} \label{eq Rr as push-forward}
\Rr \cong \zeta_{*}  \Rr^\sharp.
\end{equation}
\end{lemma}

\begin{proof}
Equation \eqref{eq Rr as push-forward} follows from \eqref{eq def Poincare sheaf} and a series of flat base change theorems. Verdier duality ensures that $\zeta_*$ commutes with the dualizing functors. Then, as $\Rr^\sharp$ is a line bundle (hence, Cohen--Macaulay), \eqref{eq Rr as push-forward} implies that the derived dual of $\Rr$ is supported in single degree.
\end{proof}

Before continuing with our study of $\Rr$, we state some properties of its endomorphisms sheaf.  

\begin{lemma} \label{lm End Rr}
The endomorphisms complex of $\Rr$ is indeed the trivial sheaf, 
\[
R\Hhom(\Rr, \Rr) \cong \Hhom(\Rr, \Rr) \cong \Oo_{\ol\Prym \times \ol \Prym}.
\]
\end{lemma} 

\begin{proof}
Observe that $\ol \Prym (q) \times \ol\Prym (q')$ has trivial dualizing sheaf after Corollary \ref{co description of fine comp Pryms}. Then, 
\[
R\Hhom \left (\Rr, \Rr \right ) \cong R\Hhom \left (\Rr, \Rr \otimes \omega_{\ol \Prym \times \ol \Prym} \right )
\]
and, since $\Rr$ is Cohen--Macaulay after Lemma \ref{lm Rr is CM}, its endomorphism complex is supported in single degree,
\[
R\Hhom \left (\Rr, \Rr \right ) \cong \Hhom \left (\Rr, \Rr \right ).
\]
Furthermore, thanks to \eqref{eq Rr as push-forward}, the endomorphisms sheaf $\Hhom (\Rr, \Rr)$ is Cohen--Macaulay too, as 
\[
R\Hhom\left ( \Hhom \left (\Rr, \Rr \right ), \omega_{\ol\Prym \times \ol \Prym}[d] \right ) \cong R\Hhom \left (\Rr, \Rr \otimes \omega_{\ol \Prym \times \ol \Prym}[d] \right ) \cong R\Hhom \left (\Rr, \omega_{\ol \Prym \times \ol \Prym}[d] \right ) \otimes^L \Rr 
\]
is supported in single degree as so is $R\Hhom \left (\Rr, \omega_{\ol\Prym \times \ol \Prym}[d] \right )$. Since $\Rr^\sharp$ is invertible, local Verdier duality implies that the endomorphism sheaf of $\Rr$ is the pushforward of a line bundle,
\[
\Hhom(\Rr, \Rr) \cong \Hhom (\zeta_* \Rr^\sharp, \Rr) \cong \zeta_* \Hhom(\Rr^\sharp, \zeta^{!}\Rr) \cong \zeta_*\left ( (\Rr^{\sharp})^{-1} \otimes \zeta^{!}\Rr \right ) \cong \zeta_* \omega_{\zeta}. 
\]
Applying local Verdier duality once more, one observes
\[
\zeta_* \omega_{\zeta} \cong \zeta_* \Hhom (\Oo_{(\ol\Prym \times \ol \Prym)^\sharp}, \zeta^{!} \Oo_{\ol\Prym \times \ol \Prym}) \cong \Hhom \left ( \zeta_*\Oo_{(\ol\Prym \times \ol \Prym)^\sharp} , \Oo_{\ol\Prym \times \ol \Prym} \right ) \cong \Hhom \left ( \Oo_{\ol\Prym \times \ol \Prym},  \Oo_{\ol\Prym \times \ol \Prym}\right ),
\]
and the result follows. 
\end{proof}

In the next step we shall show that $\Rr$ is equivariant with respect to the action of $\Gamma$. Before that, we need the following proposition and its corollary. Denote by $\Pp_X \to \Jac_X \times \Jac_X$ the Poincar\'e bundle associated to the smooth and self-dual abelian variety $\Jac_X$. With the isomorphisms of groups $\Theta_X$ and $\Theta^0_C$ defined in \eqref{eq Psi^0} at hand, let us describe the image of $\beta^*\Jac_X$ under it. Let us recall the Poincar\'e line bundle $\Pp' \to \ol\Jac_C(q) \times \Jac_C$ from Section \ref{subsec: FM and compactified Jac}.

\begin{proposition} \label{pr Psi exchanges quotients}
Let $C$ be a curve as in \eqref{dagger}, $X$ a smooth curve, $q$ a general polarization on $C$, and $\beta : C \to X$ a finite flat morphism with non-empty ramification. Consider the embedding $i : \beta^* \Jac_X \hookrightarrow \Jac_C$. Then, one has
\[
(\id \times  i)^* \Pp' \cong (\Nm_\beta \times \id)^* \Pp_X,
\]
inducing 
\[
\Nm_{\beta}^* \, \circ \, \Theta_X = \Theta_C^0 \, \circ \, \beta^*.
\]
\end{proposition}

\begin{proof}
It is enough to prove the first statement as the second follows naturally from the observation that $\Theta_X$ and $\Theta_C^0$ are isomorphisms and $\Nm_\beta^* \widehat\Jac_X$ and $\beta^*\Jac_X$ are isogenous. 

Since $\Pp'$ is locally free, $(\id \times i)^* \Pp$ can be seen as a family of line bundles over $\Jac_X$, so by the universal property of the Poincar\'e bundle $\Pp_X$: 
\[
(\id \times i)^* \Pp' \cong (\psi \times \id)^* \Pp_X \otimes p_1^* \Ww,
\]
for some line bundle $\Ww$ on $\ol\Jac_C(q)$ and some morphism 
\[
\psi : \ol \Jac_C(q) \longrightarrow \Jac_X.
\]
We assume that both Poincar\'e bundles $\Pp'$ and $\Pp_X$ are normalized, {\it i.e.} $\Pp'|_{\ol\Jac_C \times \{ \Oo_C \}} \cong \Oo_{\ol\Jac_C}$ and $\Pp_X|_{\Jac_X \times \{ \Oo_X \}} \cong \Oo_{\Jac_X}$, so $\Ww$ is trivial. One needs to show that $\psi = \Nm_\beta$. By continuity and separatedness, it suffices to show that these maps agree on the dense open subset  $\Jac_C(q) \subset \ol\Jac_C(q)$. 

We therefore fix a line bundle $L \in \Jac_C(q)$ and evaluate both maps. By Lemma \ref{lm decomposition of olJac_C} we have a decomposition $L \cong L_X \otimes M$ for $L_X \in \Jac_X$ and $M \in \Prym_{\beta}(q)$, and the Norm map is given by $\Nm_\beta(L) = L_X^n$, which via the self-duality isomorphism $\Jac_X \cong \hat \Jac_X \cong \Jac_X/\Jac_X[n]$ is identified with $\Pp_{X,L_X}$. It then suffices to show
\begin{equation} \label{Norm = psi}
\Pp_{X,L_X} \cong i^*\Pp|_L.
\end{equation}
The right hand side is given by
\[
i^*\Pp|_L \cong i^* \Dd_{f_2} (\Uu_q \otimes f_1^*L)^{-1} \otimes i^{*} \Dd_{f_2}(f_1^*L) \otimes i^{*} \Dd_{f_2}(\Uu_q), 
\]
where $f_i$ denotes the corresponding projection appearing in \eqref{eq: f projections} and $\Uu_q$ is the universal family for the fine moduli space $\ol \Jac_C(q)$. Using the Cartesian diagram of projections and embeddings,
\begin{equation*}
\begin{tikzcd}[column sep = huge]
 & C \times \beta^{*} \Jac_X \arrow[dl, " g_1 "'] \arrow[r, " 1 \times i "] \arrow[d, " g_2 "'] & C \times \ol \Jac_C(q)  \arrow[d, " f_2 "] \\
C & \beta^{*} \Jac_X \arrow[r, " i "'] & \ol \Jac_C(q) 
\end{tikzcd},
\end{equation*}
where $g_1 := f_1 \circ (1 \times i)$ is the projection to $C$, one applies base change for determinant in cohomology \cite{esteves} to get
\[ 
i^*\Pp|_L \cong D_{g_2}( (1 \times i)^{*}\Uu_q \otimes g_1^{*}L)^{-1} \otimes D_{g_2}(g_1^{*}L) \otimes D_{g_2}( (1 \times i )^{*} \Uu_q ).
\]
By the universal property, the universal bundle $\Vv$ for $\Jac_X$ with a normalisation adapted to $\Uu$ satisfies 
\[
(1 \times i)^{*} \Uu \cong (\beta \times 1)^{*} \Vv.
\]
Substituting this alongside $L \cong L_X \otimes M$ and applying \cite[Cory. 4.4]{esteves} gives 
\[
i^*\Pp|_L \cong  \Tt_1 \otimes \Tt_2,
\]
with
\[
\Tt_1 \cong 
D_{g_2}( (\beta \times 1)^{*} \Vv \otimes g_1^{*} \beta^{*} L_X )^{-1}
\otimes D_{g_2}( g_1^{*} \beta^{*} L_X)
\otimes D_{g_2}( (\beta \times 1)^{*} \Vv),
\]
\[
\Tt_2 \cong 
D_{g_2}( (\beta \times 1)^{*} \Vv \otimes g_1^{*}M)^{-1}
\otimes D_{g_2}( g_1^{*}M )
\otimes D_{g_2}( (\beta \times 1)^{*} \Vv).
\]
Another application of base change gives $\Tt_1 \cong \Pp_{X,L_X}$ and one can also use $\Nm(M) \cong \Oo_X$ to show that $\Tt_2 \cong \Oo_{\Jac_X}$, from which (\ref{Norm = psi}) follows, concluding the proof. \end{proof}

%Since the lines bundles obtained by pull-back under $\beta$ trivialize over $\Nm_\beta^{-1}(\Oo_X) \cong \ol\Prym_\beta(q)$, one derives the following.

%\begin{corollary} \label{co Pp_gamma trivializes over Prym}
%Take a polarization $q'$ such that $\Oo_C$ is $q'$-stable, so that $\beta^* \Jac_X$ and $\ol \Prym_\beta(q')$ intersect at $\Gamma$. For every $L_\gamma \in \Gamma$, we have 
%\[
%\jmath^*\Pp_{L_\gamma} \cong \Oo_{\ol\Prym_\beta}.
%\]  
%\end{corollary}
We are now in a position to prove that $\Rr$ is $\Gamma$-equivariant.
\begin{proposition} \label{pr Gamma equivariant}
Let $C$ be a curve as in \eqref{dagger}, $X$ a smooth curve, $q$ and $q'$ general polarizations on $C$ and $\beta : C \to X$ a finite flat morphism with non-empty ramification. The sheaf $\Rr$, defined over $\ol{\Prym}_\beta(q) \times \ol{\Prym}_\beta(q')$, is flat over each factor and $\Gamma$-equivariant, with $\gamma \in \Gamma$ acting by pull-back under $\id_{\ol{\Prym}} \times \tau_\gamma$. 
\end{proposition}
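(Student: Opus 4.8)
The plan is to establish the two assertions separately. Flatness over each factor is exactly the content of Lemma \ref{lm flatness of Rr}, so the only new work is producing a $\Gamma$-equivariant structure on $\Rr = (\jmath \times \jmath)^*\Pp$, where $\Gamma$ acts on the second factor by $\tau_\gamma$ (tensoring by $L_\gamma$) and trivially on the first. Concretely, for each $L_\gamma \in \Gamma$ one needs a coherent isomorphism
\[
\phi_\gamma : \Rr \stackrel{\cong}{\longrightarrow} (\id_{\ol\Prym} \times \tau_\gamma)^* \Rr
\]
satisfying the cocycle condition $\phi_{\gamma\delta} = (\id \times \tau_\gamma)^*\phi_\delta \circ \phi_\gamma$ (up to the usual canonical identifications). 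The strategy is to obtain such an isomorphism from the defining property of the Poincaré sheaf under tensorization in the second variable, which by construction behaves like the Poincaré bundle: for the smooth locus, $\Pp^\sharp_{\ol\Jac(q) \times \{ \Nn \otimes L_\gamma \}} \cong \Pp^\sharp_{\ol\Jac(q) \times \{\Nn\}} \otimes \Pp_{L_\gamma}$, where $\Pp_{L_\gamma} = \Pp|_{\ol\Jac(q) \times \{L_\gamma\}}$. Pulling this relation back along $\xi$ (and using that $\Pp = \xi_* \Pp^\sharp$ with the complement of codimension $2$, so the isomorphism extends) gives, on $\ol\Jac_C(q) \times \ol\Jac_C(q')$, an isomorphism
\[
(\id \times \tau_\gamma)^* \Pp \cong \Pp \otimes \p_1^* \Pp_{L_\gamma}.
\]

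Next I would pull this back along $\jmath \times \jmath$. On the left it gives $(\id_{\ol\Prym} \times \tau_\gamma)^*\Rr$ (using that $\tau_\gamma$ restricts to an automorphism of $\ol\Prym_\beta(q')$, since $L_\gamma \in \Prym_\beta$, as noted in Section \ref{sc Pryms}); on the right it gives $\Rr \otimes \p_1^* \jmath^* \Pp_{L_\gamma}$. By Corollary \ref{co Pp_gamma trivializes over Prym}, $\jmath^* \Pp_{L_\gamma} \cong \Oo_{\ol\Prym_\beta}$, so the twist disappears and we obtain the desired isomorphism $\phi_\gamma : (\id \times \tau_\gamma)^*\Rr \cong \Rr$. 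The remaining point is the cocycle condition: one must check that the chosen trivializations $\jmath^*\Pp_{L_\gamma} \cong \Oo$ can be normalized compatibly across $\gamma \in \Gamma$, i.e. that the isomorphisms $\phi_\gamma$ compose correctly. This follows because the identification $\Psi_0 : \Jac_C \xrightarrow{\cong} \Pic^0(\ol\Jac_C(q))$ of Melo--Rapagnetta--Viviani is a group homomorphism — so $\gamma \mapsto \Pp_{L_\gamma}$ is multiplicative up to canonical isomorphism — and, restricted to $\beta^*\Jac_X$, Proposition \ref{pr Psi exchanges quotients} identifies it with $\Nm_\beta^*$ applied to the self-duality of $\Jac_X$, under which $\Gamma = \ker[n]$ maps into $\ker \Nm_\beta$; hence the trivializations over $\ol\Prym_\beta$ are canonical and the multiplicativity of $\Psi_0$ forces the cocycle identity after fixing a normalization of $\Pp$ (e.g. $\Pp|_{\{\Oo_C\} \times \ol\Jac_C} \cong \Oo$).

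The main obstacle I anticipate is precisely this coherence/cocycle bookkeeping: producing \emph{one} isomorphism $\phi_\gamma$ per $\gamma$ is straightforward from the see-saw-type behaviour of $\Pp$ and Corollary \ref{co Pp_gamma trivializes over Prym}, but verifying that the family $\{\phi_\gamma\}_{\gamma \in \Gamma}$ is an honest $\Gamma$-equivariant structure (not merely ``equivariant up to scalars'') requires pinning down the trivializations of $\jmath^*\Pp_{L_\gamma}$ in a $\Gamma$-functorial way. I would handle this by working with the normalized Poincaré sheaf and exploiting that over the smooth locus $\Prym_\beta(q) \subset \ol\Prym_\beta(q)$ everything reduces to the classical Poincaré bundle on an abelian variety, where the multiplicativity $\Pp_{L \otimes L'} \cong \Pp_L \otimes \Pp_{L'}$ with its coherences is standard; the isomorphisms then extend uniquely from the smooth locus to all of $\ol\Prym_\beta$ because $\Rr$ is (by Lemma \ref{lm flatness of Rr}) flat, hence torsion-free in the relevant sense, and the complement has codimension at least $2$. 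Finally, once $\phi_\gamma$ is defined on the dense open locus and shown to satisfy the cocycle condition there, it propagates to the whole product by the separatedness of $\Hom$-sheaves, completing the proof.
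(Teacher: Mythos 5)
Your proof takes a genuinely different route from the paper's. The paper first establishes \emph{pointwise} equivariance, $\Rr|_{\ol\Prym \times \{\Ff\}} \cong (\id_{\ol\Prym} \times \tau_\gamma)^*\Rr|_{\ol\Prym \times \{\Ff\}}$ for each $\Ff$ (combining the equivariance of $\Psi$ with Corollary~\ref{co Pp_gamma trivializes over Prym}), and then upgrades this to a global isomorphism via the see-saw principle (Lemma~\ref{lm see-saw}), after checking that $\ol\Prym_\beta(q)$ is reduced, projective and locally Noetherian, that $\Rr$ and $(\id\times\tau_\gamma)^*\Rr$ are flat over the second factor (Lemma~\ref{lm flatness of Rr}), and that the restrictions $\Rr|_{\ol\Prym\times\{\Ff\}}$ are simple (as $\Rr$ is the push-forward of a line bundle from a $\sharp$-locus whose complement has codimension $\geq 2$). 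You instead build the global isomorphism directly, identifying $(\id\times\tau_\gamma)^*\Pp$ with $\Pp\otimes\p_1^*\Pp_{L_\gamma}$ on the $\sharp$-locus, pushing forward by $\xi$, and then killing the twist after restriction along $\jmath\times\jmath$ via Corollary~\ref{co Pp_gamma trivializes over Prym}. Both routes are valid and parallel; the see-saw argument defers most of the bookkeeping, while your direct construction is more explicit and feeds naturally into the cocycle verification.

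On the cocycle you actually go beyond the paper. The paper's see-saw argument produces, for each fixed $\gamma$, \emph{some} isomorphism $\Rr\cong(\id\times\tau_\gamma)^*\Rr$, but never checks these compose coherently; since $\Rr$ is simple (so each such isomorphism is unique only up to scalar) and $\Gamma\cong\Jac_X[n]$ can carry nontrivial $H^2(\Gamma,k^\ast)$, the promotion of a family of isomorphisms to a genuine equivariant structure is not automatic. Your appeal to the multiplicativity of $\Psi_0$, together with Proposition~\ref{pr Psi exchanges quotients} reducing the trivializations over $\ol\Prym_\beta$ to the biextension coherences of the normalized Poincar\'e bundle on $\Jac_X\times\widehat\Jac_X$, is the right mechanism, though it deserves a careful write-up. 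One technical gap to close: the relation $\Pp^\sharp_{\ol\Jac(q)\times\{\Nn\otimes L_\gamma\}}\cong\Pp^\sharp_{\ol\Jac(q)\times\{\Nn\}}\otimes\Pp_{L_\gamma}$ is established only for $\Nn$ a line bundle, i.e.\ on $\ol\Jac_C(q)\times\Jac_C(q')$, whose complement in the product generally has codimension \emph{one}. Before invoking that the complement of the $\sharp$-locus has codimension $\geq 2$, you must also verify the isomorphism on the other piece $\Jac_C(q)\times\ol\Jac_C(q')$ of the $\sharp$-locus — for which one needs the equivariance of $\Psi$ under tensorization by $\Jac_C$ on $\ol\Jac_C(q')$, recorded in the paper after Theorem~\ref{thm:derivedequiv} — and check agreement on the overlap $\Jac_C(q)\times\Jac_C(q')$.
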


We need to recall the {\it see-saw} principle before addressing the proof of Proposition \ref{pr Gamma equivariant}. Here, we reproduce the statement as in \cite[Lma. 5.5]{melo2}, adapting the hypothesis to our case.

\begin{lemma}[See-saw principle] \label{lm see-saw}
Let $Z$ and $T$ be two reduced locally Noetherian schemes with $Z$ proper and connected. Let $\Ee$ and $\Ff$ be two sheaves on $Z \times T$, flat over $T$, such that 
\begin{enumerate}
\item[(i)] $\Ff |_{Z \times \{ t \}} \cong \Ee |_{Z \times \{ t \}}$, for all $t \in T$;
\item[(ii)] $\Ff|_{Z \times \{ t \}}$ is simple for every $t \in T$;  
\item[(iii)] there exists $z_0 \in Z$ and an isomorphism of line bundles $\Ff |_{\{ z_0 \} \times T} \cong \Ee |_{\{ z_0 \} \times T}$.
\end{enumerate}
Then $\Ee$ and $\Ff$ are isomorphic. 
\end{lemma}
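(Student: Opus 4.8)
The plan is to assemble the fibrewise comparison isomorphisms of (i) into a single line bundle $\mathcal{Q}$ on the base $T$ by means of Grothendieck's representability of the Hom functor, and then to use (iii) to trivialise $\mathcal{Q}$. I expect the only genuinely delicate point to be the bookkeeping, not any estimate.

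First I would record the fibrewise statement. By (i) and (ii), for every point $t \in T$ the restriction functor gives
\[
\Hom_{Z_t}\!\big(\Ee|_{Z \times \{t\}},\, \Ff|_{Z \times \{t\}}\big)\;\cong\;\Hom_{Z_t}\!\big(\Ff|_{Z \times \{t\}},\, \Ff|_{Z \times \{t\}}\big)\;=\;k(t),
\]
a one--dimensional vector space, the last equality because $\Ff|_{Z \times \{t\}}$ is simple and $Z$ is proper, connected and reduced over the algebraically closed field $k$ (so a simple sheaf on $Z_t$ has only scalar endomorphisms). Writing $p \colon Z \times T \to T$ for the projection, which is proper since $Z$ is proper over $k$, and using that $\Ff$ is coherent and flat over $T$, I would then invoke the representability of the Hom functor (EGA III, 7.7.8--7.7.9): there is a coherent sheaf $\mathcal{Q}$ on $T$ together with isomorphisms $p_{T'*}\mathcal{H}om(\Ee_{T'}, \Ff_{T'}) \cong \mathcal{H}om_{\mathcal{O}_{T'}}(\mathcal{Q}_{T'}, \mathcal{O}_{T'})$, functorial in $T' \to T$. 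Restricting to $T' = \Spec k(t)$ and comparing with the display above shows $\mathcal{Q} \otimes k(t)$ is one--dimensional for all $t$; hence, by Nakayama, $\mathcal{Q}$ is locally cyclic, and since its fibre dimension is constant and $T$ is reduced, $\mathcal{Q}$ is in fact an invertible sheaf on $T$.

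Next I would twist. Put $\Ee' := \Ee \otimes p^*\mathcal{Q}^{-1}$, still coherent and flat over $T$. As $\mathcal{Q}$ is invertible, the projection formula gives $p_{T'*}\mathcal{H}om(\Ee'_{T'}, \Ff_{T'}) \cong p_{T'*}\mathcal{H}om(\Ee_{T'}, \Ff_{T'}) \otimes \mathcal{Q}_{T'} \cong \mathcal{Q}^{-1}_{T'} \otimes \mathcal{Q}_{T'} \cong \mathcal{O}_{T'}$, functorially in $T'$. In particular the section $1 \in H^0(T, \mathcal{O}_T)$ produces a morphism $\phi \colon \Ee' \to \Ff$ over $Z \times T$ whose restriction to each fibre $Z_t$ is, by functoriality under $\Spec k(t) \to T$, a generator of $\Hom_{Z_t}(\Ee'|_{Z_t}, \Ff|_{Z_t}) \cong k(t)$, hence nonzero, hence, since both fibres are simple and isomorphic, an isomorphism. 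A fibrewise isomorphism between two $T$--flat coherent sheaves is an isomorphism: the cokernel of $\phi$ vanishes on every fibre and so vanishes by Nakayama, after which, $\Ff$ being $T$--flat, the kernel of $\phi$ also vanishes fibrewise and so vanishes. Thus $\Ff \cong \Ee \otimes p^*\mathcal{Q}^{-1}$. Finally I would restrict this isomorphism along the slice $\iota \colon \{z_0\} \times T \hookrightarrow Z \times T$, identified with $T$ via $p$, obtaining $\Ff|_{\{z_0\} \times T} \cong \Ee|_{\{z_0\} \times T} \otimes \mathcal{Q}^{-1}$. By (iii) both restrictions are line bundles on $T$ and are isomorphic, so tensoring with the inverse of $\Ee|_{\{z_0\} \times T}$ forces $\mathcal{Q}^{-1} \cong \mathcal{O}_T$, whence $\Ff \cong \Ee$.

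The main obstacle is conceptual bookkeeping rather than a computation: one cannot compare $\Ee$ and $\Ff$ directly, because the auxiliary line bundle $\mathcal{Q}$ on $T$ is genuinely present, and hypothesis (iii) is precisely what is needed to kill it; one must also be careful about the functoriality in the representability statement so as to be sure that the universal morphism $\phi$ is fibrewise nonzero. The remaining ingredients — the two Nakayama arguments, the invertibility of $\mathcal{Q}$, and ``a fibrewise isomorphism between sheaves flat over the base is an isomorphism'' — are routine.
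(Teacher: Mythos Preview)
The paper does not supply its own proof of this lemma: it is quoted verbatim (with adapted hypotheses) from \cite[Lemma~5.5]{melo2} and used as a black box. Your argument is a correct and standard proof of the see-saw principle in this generality, via Grothendieck's representability of the relative $\Hom$ functor (EGA~III, 7.7.8--9) to produce the twisting line bundle $\Qq$ on $T$, followed by the use of hypothesis~(iii) to trivialise it. One small remark: when you compute $\Hom_{Z_t}(\Ee|_{Z_t},\Ff|_{Z_t})\cong k(t)$, the identification of the endomorphism ring of a simple sheaf with the ground field uses that $k(t)$ is algebraically closed, so strictly speaking you are arguing at closed points of $T$; but this is all that is needed to conclude that $\Qq$ is invertible, since $T$ is reduced and of finite type over $k$.
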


\begin{proof}[Proof of Proposition \ref{pr Gamma equivariant}]
The equivariance of the duality isomorphism, 
\[
\Theta_C(\beta^*L_\gamma \otimes \Ff) \cong \Theta_C^0(\beta^*L_\gamma) \otimes \Theta_C(\Ff) \cong \Nm_\beta^* \Theta_X(L_\gamma) \otimes \Theta_C(\Ff), 
\]
and since $\Nm_\beta^* \Theta_X(L_\gamma)$ trivializes over $\ol\Prym_\beta(q)$, we conclude that $\Rr$ is pointwise equivariant, {\it i.e.}
\begin{equation} \label{eq pointwise Gamma equivariance}
\Rr|_{\ol{\Prym} \times \{ \Ff \}} \cong (\id_{\ol{\Prym}} \times \tau_\gamma)^* \Rr|_{\ol{\Prym} \times \{ \Ff \}},
\end{equation}
for any $\Ff \in \ol\Prym_\beta(q)$. In order to obtain an isomorphism between $\Rr$ and $(\id_{\ol \Prym} \times \tau_\gamma)^*\Rr$, we shall now apply the see-saw principle, previously reproduced as Lemma \ref{lm see-saw}. Let us now check if the hypothesis of this lemma hold.

Thanks to Corollary \ref{co description of fine comp Pryms} $\ol\Prym_\beta(q)$ is reduced, projective (hence proper) and locally Noetherian scheme. Furthermore, $\Rr$ is flat over $\ol\Prym_\beta(q')$ thanks to Lemma \ref{lm flatness of Rr}, and since $\id_{\ol{\Prym}} \times \tau_\gamma$ is an isomorphism, $(\id_{\ol{\Prym}} \times \tau_\gamma)^* \Rr$ is also flat over the second factor, which is one of the hypothesis of the see-saw principle (Lemma \ref{lm see-saw}). Also, after \eqref{eq pointwise Gamma equivariance} one has that hypothesis (i) of Lemma \ref{lm see-saw} is satisfied. 

To check hypothesis (ii), take $\Ff \in \ol \Prym_\beta(q')$ and consider the morphism $\iota_{\Ff} : \ol  \Prym_\beta(q) \times \{ \Ff \} \hookrightarrow \ol  \Prym_\beta(q) \times \ol  \Prym_\beta(q')$, it follows from Lemma \ref{lm End Rr} that
\[
\Hhom \left (\Rr_{\Ff}, \Rr_{\Ff} \right ) \cong \Hhom \left ( \iota^*_{\Ff} \Rr, \iota^*_{\Ff} \Rr \right ) \cong \iota^*_{\Ff}\Hhom(\Rr, \Rr) \cong \Oo_{\ol\Prym \times\{ \Ff \}},
\]
implying that hypothesis (ii) of Lemma \ref{lm see-saw} is satisfied.

We recall that $\Pp$ is normalized at $\{ \Oo_C \}$, so $\Rr|_{\{ \Oo_C \} \times \ol\Prym} \cong \Oo_{\ol\Prym}$, hence 
\[
\Rr|_{\{ \Oo_C \} \times \ol\Prym} \cong \Oo_{\ol\Prym} \cong \tau_\gamma^* \Oo_{\ol\Prym} \cong \tau_\gamma^* \Rr|_{\{ \Oo_C \} \times \ol\Prym},
\]
what implies that (iii) holds as well.

Since all of the hypothesis of Lemma \ref{lm see-saw} hold, one concludes that
\[
\Rr \cong \left (\id_{\ol\Prym} \times \tau_\gamma \right )^* \Rr
\]
for all $\gamma \in \Gamma$. Choosing a set of generators of $\Gamma$ one can equip $\Rr$ with a $\Gamma$-equivariant structure.
\end{proof}

\subsection{An equivalence of categories} \label{equivalence equivariant derived category}
With the sheaf $\Rr := (\jmath \times \jmath')^{*} \Pp$ on $\ol \Prym_\beta(q) \times \ol \Prym_\beta(q')$ as an integral kernel and the projections onto the first and second factors
\begin{equation} \label{eq r projections}
\xymatrix{
	&  \ol{\Prym}_\beta(q) \times \ol{\Prym}_\beta(q') \ar[ld]_{r_1} \ar[rd]^{r_2} &
	\\
	\ol{\Prym}_\beta(q) & & \ol{\Prym}_\beta(q'),}
\end{equation}
we may define the integral functor 
\begin{equation} \label{eq Phi^R}
\morph{D^b \left ( \ol{\Prym}_\beta(q) \right )}{D^b \left ( \ol{\Prym}_\beta(q') \right )}{\Ff^\bullet}{R r_{2,*}(r_1^*\Ff^\bullet \otimes \Rr).}{}{\Phi^\Rr_{\ol\Prym \to \ol\Prym'}}
\end{equation}
Now consider the equivariant dervied category $D^b(\ol \Prym_\beta(q'), \Gamma):= D^b(\Coh(\ol \Prym_\beta(q'))^{ \Gamma})$, which is the derived category of $\Gamma$-equivariant sheaves on $\ol \Prym_\beta(q')$. The usual derived functors may be defined and satisfy adjunction, the projection formula, flat base change and Grothendieck-Verdier duality. For more details see for instance \cite[Section 4]{bridgeland&king&reid}. 

Considering the trivial $\Gamma$-action on $\ol{\Prym}_\beta(q)$, one can identify $D^b( \ol{\Prym}_\beta(q), \Gamma) = D^b( \ol{\Prym}_\beta(q))$ by means of the forgetful functor $\zeta_0:D^b( \ol{\Prym}_\beta(q), \Gamma ) \to D^b( \ol{\Prym}_\beta(q) )$. With the tensoral $\Gamma$-action on $\ol \Prym_\beta(q')$, $\Rr$ is $\Gamma$-equivariant with respect to the induced action on $\ol \Prym_\beta(q) \times \ol \Prym_\beta(q')$, as shown in Proposition \ref{pr Gamma equivariant}, so $\Rr$ can be equipped with a $\Gamma$-equivariant structure and has a natural interpretation as an element of $D^b(\ol \Prym_\beta(q) \times \ol \Prym_\beta(q'), \Gamma)$. We may therefore define the integral functor on the equivariant derived categories, 
\begin{equation} \label{eq Psi^R}
\morph{D^b \left ( \ol{\Prym}_\beta(q) \right ) = D^b \left ( \ol{\Prym}_\beta(q) , \Gamma \right )}{D^b \left ( \ol{\Prym}_\beta(q'), \Gamma \right )}{\Ff^\bullet}{R r_{2,*}(r_1^*\Ff^\bullet \otimes \Rr),}{}{\Psi^\Rr_{\ol\Prym \to \ol\Prym'}}
\end{equation}
which lifts to \eqref{eq Phi^R} via the forgetful functor $\zeta:D^b( \ol{\Prym}_\beta(q'), \Gamma ) \to D^b( \ol{\Prym}_\beta(q') )$, in the sense that we get a commutative square 
\begin{equation*}
\begin{tikzcd}[column sep = huge]
D^b( \ol{\Prym}_\beta(q) ) \arrow[r, "\Phi^\Rr_{\ol\Prym \to \ol\Prym'}"] &  D^b( \ol{\Prym}_\beta(q') ) \\
D^b( \ol{\Prym}_\beta(q), \Gamma ) \arrow[r, "\Psi^\Rr_{\ol\Prym \to \ol\Prym'}"'] \ar["\zeta_0", "\simeq"', u] & D^b( \ol{\Prym}_\beta(q'), \Gamma ) \arrow[u, "\zeta"']
\end{tikzcd}.
\end{equation*}

The main theorem of this article is that $\Psi^{\Rr}_{\ol \Prym \to \ol \Prym'}$ is a Fourier-Mukai transform. 

\begin{theorem} \label{tm fourier-mukai general case}
Suppose that $C$ is a curve as in \eqref{dagger} with a finite flat morphism $\beta:C \to X$ with non-empty ramification. The integral functor 
\[
\Psi^{\Rr}_{\ol \Prym \to \ol \Prym'}: D^b(\ol \Prym_\beta(q)) \to D^b(\ol \Prym_\beta(q'),{\Gamma}),
\]
as defined in \eqref{eq Psi^R}, is an equivalence of categories. 
\end{theorem}

The remainder of this section is dedicated to the proof. Our main tool is the following criterion of Bridgeland. 

\begin{theorem}[\cite{bridgeland}, Th. 3.3]
\label{tm bridgeland equivalence}
Let $\Aa$ and $\Bb$ be triangulated categories, and let $F: \Aa \to \Bb$ be a fully faithful exact functor. Suppose that $\Bb$ is indecomposable, and that not every object of $\Aa$ is isomorphic to $0$. Then $F$ is an equivalence of categories if and only if $F$ has a left adjoint $G$ and a right adjoint $H$ such that for any object $B$ of $\Bb$, $H(B)\cong 0$ implies $G(B)\cong 0$.
\end{theorem}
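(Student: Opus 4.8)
The plan is to prove the substantive implication — that if the fully faithful exact functor $F$ admits a left adjoint $G$ and a right adjoint $H$ with the stated kernel condition, then $F$ is an equivalence — the converse being immediate: if $F$ is an equivalence, its quasi-inverse is at once a left and a right adjoint, so one may take $G\cong H$, and then $H(B)\cong 0$ forces $B\cong FH(B)\cong 0$, hence a fortiori $G(B)\cong 0$.

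So assume $F\colon\Aa\to\Bb$ is fully faithful with left adjoint $G$, right adjoint $H$, with $\Bb$ indecomposable and $\Aa$ not consisting only of zero objects. First I would record the standard consequences of full faithfulness: the counit $GF\Rightarrow\mathrm{id}_\Aa$ of $G\dashv F$ and the unit $\mathrm{id}_\Aa\Rightarrow HF$ of $F\dashv H$ are natural isomorphisms. Letting $\Ii\subseteq\Bb$ be the essential image of $F$ — a full triangulated subcategory, being the image of an exact fully faithful functor — a routine adjunction computation identifies its orthogonals with kernels of the adjoints: $B\in\Ii^{\perp}$ iff $\Hom_\Bb(FA,B)=0$ for all $A$ iff $\Hom_\Aa(A,HB)=0$ for all $A$ iff $H(B)\cong 0$, so $\Ii^{\perp}=\ker H$; symmetrically ${}^{\perp}\Ii=\ker G$. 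Both $\ker H$ and $\ker G$ are thick triangulated subcategories (kernels of exact functors), and the hypothesis ``$H(B)\cong 0\Rightarrow G(B)\cong 0$'' is precisely the inclusion $\Ii^{\perp}=\ker H\subseteq\ker G={}^{\perp}\Ii$.

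The crux is to turn this inclusion into a decomposition of $\Bb$. Given any $B\in\Bb$, complete the counit $FH(B)\to B$ to a distinguished triangle $FH(B)\to B\to B'\to FH(B)[1]$. Applying $H$ and using the triangle identity for $F\dashv H$ together with the isomorphism $\mathrm{id}_\Aa\Rightarrow HF$, one sees that $H$ sends the counit to an isomorphism, so $H(B')\cong 0$, i.e. $B'\in\Ii^{\perp}$, while $FH(B)\in\Ii$ by construction. Since $\Ii^{\perp}\subseteq{}^{\perp}\Ii$, we get $\Hom_\Bb(B',FH(B)[1])=0$, so the triangle splits: $B\cong FH(B)\oplus B'$. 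Moreover $\Hom_\Bb(\Ii,\Ii^{\perp})=0$ by definition of $\Ii^{\perp}$ and $\Hom_\Bb(\Ii^{\perp},\Ii)=0$ because $\Ii^{\perp}\subseteq{}^{\perp}\Ii$; thus $\Ii$ and $\Ii^{\perp}$ are completely orthogonal triangulated subcategories and every object of $\Bb$ is a direct sum of one object of each. It follows that the exact additive functor $\Ii\times\Ii^{\perp}\to\Bb$, $(X,Y)\mapsto X\oplus Y$, is an equivalence of triangulated categories — fully faithful by the $\Hom$-orthogonality, essentially surjective by the splitting. Since $F$ is fully faithful and $\Aa$ has a non-zero object, $\Ii\neq 0$; as $\Bb$ is indecomposable this forces $\Ii^{\perp}=0$. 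Feeding this back in, $B\cong FH(B)\in\Ii$ for every $B$, so $F$ is essentially surjective, hence an equivalence.

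The step I expect to be most delicate is this last passage — verifying that a pair of completely orthogonal triangulated subcategories through which every object splits genuinely assembles into a product decomposition of triangulated categories compatible with shifts and triangles, and matching this with the exact form of Bridgeland's notion of an indecomposable triangulated category. By comparison, the (co)unit isomorphisms, the identifications $\Ii^{\perp}=\ker H$ and ${}^{\perp}\Ii=\ker G$, and the splitting of the triangle above are all routine once the adjunctions are in hand.
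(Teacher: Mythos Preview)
The paper does not prove this statement at all: it is quoted verbatim as \cite{bridgeland}, Theorem 3.3, and used as a black box to deduce that $\Phi^\Qq_{\ol\Prym \to \ol\Prym/\Gamma}$ is an equivalence once it is shown to be fully faithful with coinciding left and right adjoints. There is therefore no ``paper's own proof'' to compare your proposal against.

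That said, your argument is correct and is essentially Bridgeland's original proof: identify $\Ii^\perp=\ker H$ and ${}^\perp\Ii=\ker G$ via the adjunctions, use the hypothesis to obtain $\Ii^\perp\subseteq{}^\perp\Ii$, split every object of $\Bb$ by completing the counit $FH(B)\to B$ to a triangle whose cone lies in $\Ii^\perp$, and invoke indecomposability of $\Bb$ to kill $\Ii^\perp$. Your caveat about checking that the resulting orthogonal pair really yields a product decomposition of triangulated categories is well placed but easily handled: Bridgeland's definition of indecomposable is precisely that $\Bb$ admits no nontrivial such orthogonal decomposition, so the splitting you produce already contradicts indecomposability unless $\Ii^\perp=0$.
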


The existence of left and right adjoints to $\Psi^{\Rr}_{\ol \Prym \to \ol \Prym'}$ follows from an equivariant modification to work of Hern\'andez-Ruip\'erez--Lopez-Mart\'{\i}n--Sancho-de-Salas \cite{HLS}, which has been widely generalised by Rizzardo \cite{rizzardo}.  

\begin{proposition}[\cite{HLS}, Prop. 1.17] \label{pr adjoints}
Let $X$ and $Y$ be Gorenstein, projective schemes of finite type with dualising sheaves $\omega_X$, $\omega_Y$ and dimensions $m=\dim{X}$, $n=\dim{Y}$. If $\Kk$ is a complex of finite projective dimension on $X\times Y$, flat over each factor, then the integral functor $\Phi^{\Kk}_{X\rightarrow Y} : D^b(X) \rightarrow D^b(Y)$ has the following left and right adjoints: 
\begin{equation*}
    \adjointpair{\adjointpair{\Phi^{\Kk^\vee \otimes \pi_Y^\ast \omega_Y[n]}_{Y\rightarrow X}}{\Phi^\Kk_{X\rightarrow Y}}}{\Phi^{\Kk^\vee \otimes \pi_X^\ast \omega_X[m]}_{Y\rightarrow X}}.
\end{equation*}

\end{proposition}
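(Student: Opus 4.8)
The plan is to decompose $\Phi^{\Qq}_{X\to Y}$ into three elementary functors, compute the left and right adjoint of each, and then compose the adjoints in the reverse order, using the projection formula to put the answer in the stated ``integral functor'' form. Write $\pi_X,\pi_Y$ for the projections of $X\times Y$ onto its factors. Since $X$ and $Y$ are Gorenstein and of finite type over $k$, so is $X\times Y$, and the morphisms $\pi_X,\pi_Y$ are flat and projective with Gorenstein fibres; $\pi_X$ has relative dimension $n=\dim Y$ and $\pi_Y$ has relative dimension $m=\dim X$. First I would record the formal properties of the kernel: on a Noetherian scheme a bounded complex with coherent cohomology of finite projective dimension is perfect, so $\Qq$ is a perfect complex on $X\times Y$; hence $\Qq^{\vee}:=R\mathcal{H}om(\Qq,\Oo_{X\times Y})$ is again perfect, the biduality map $\Qq\to\Qq^{\vee\vee}$ is an isomorphism, and $R\mathcal{H}om(\Qq,-)\cong\Qq^{\vee}\otimes^{L}(-)$. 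In particular $\Qq$ is dualizable, so $(-)\otimes^{L}\Qq$ admits a left and a right adjoint, both isomorphic to $(-)\otimes^{L}\Qq^{\vee}$. Using that $\pi_Y$ is proper of finite cohomological dimension together with perfectness of $\Qq$, one checks that $\Phi^{\Qq}_{X\to Y}$ and the two candidate functors all carry $D^b$ with coherent cohomology into itself, so it suffices to prove the adjunctions on these bounded derived categories.

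Next I would write $\Phi^{\Qq}_{X\to Y}=R\pi_{Y,*}\circ\big((-)\otimes^{L}\Qq\big)\circ L\pi_X^{*}$ and apply Grothendieck--Serre duality for the projective morphisms $\pi_X$, $\pi_Y$. Because $\pi_X$ is a flat (hence perfect) morphism with Gorenstein fibres of dimension $n$, its relative dualizing complex is the shifted line bundle $\omega_{X\times Y/X}[n]\cong\pi_Y^{*}\omega_Y[n]$, and since this is a shifted invertible sheaf one has $\pi_X^{!}(-)\cong L\pi_X^{*}(-)\otimes^{L}\pi_Y^{*}\omega_Y[n]$; symmetrically $\pi_Y^{!}(-)\cong L\pi_Y^{*}(-)\otimes^{L}\pi_X^{*}\omega_X[m]$ (this is where the base-change compatibility $\omega^{\bullet}_{X\times Y}\cong\pi_X^{*}\omega_X[m]\otimes\pi_Y^{*}\omega_Y[n]$ and the Gorenstein hypothesis are used). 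From $L\pi_X^{*}\dashv R\pi_{X,*}\dashv\pi_X^{!}$ one then deduces that $L\pi_X^{*}$ also has a left adjoint, namely $R\pi_{X,*}\big((-)\otimes^{L}\pi_Y^{*}\omega_Y[n]\big)$: indeed $\Hom(A,L\pi_X^{*}B)\cong\Hom\!\big(A\otimes^{L}\pi_Y^{*}\omega_Y[n],\pi_X^{!}B\big)\cong\Hom\!\big(R\pi_{X,*}(A\otimes^{L}\pi_Y^{*}\omega_Y[n]),B\big)$. Likewise $R\pi_{Y,*}$ has left adjoint $L\pi_Y^{*}$ and right adjoint $\pi_Y^{!}$.

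Assembling the pieces (the adjoint of a composite is the composite of the adjoints in reverse order): the right adjoint of $\Phi^{\Qq}_{X\to Y}$ is $R\pi_{X,*}\circ\big((-)\otimes^{L}\Qq^{\vee}\big)\circ\pi_Y^{!}$, which on $\Gg^\bullet$ equals $R\pi_{X,*}\big(\Qq^{\vee}\otimes^{L}\pi_X^{*}\omega_X[m]\otimes^{L}L\pi_Y^{*}\Gg^\bullet\big)$; by the projection formula this is $\Phi^{\Qq^{\vee}\otimes\pi_X^{*}\omega_X[m]}_{Y\to X}(\Gg^\bullet)$. Dually, the left adjoint is $R\pi_{X,*}\big((-)\otimes^{L}\pi_Y^{*}\omega_Y[n]\big)\circ\big((-)\otimes^{L}\Qq^{\vee}\big)\circ L\pi_Y^{*}$, which on $\Gg^\bullet$ is $R\pi_{X,*}\big(L\pi_Y^{*}\Gg^\bullet\otimes^{L}\Qq^{\vee}\otimes^{L}\pi_Y^{*}\omega_Y[n]\big)=\Phi^{\Qq^{\vee}\otimes\pi_Y^{*}\omega_Y[n]}_{Y\to X}(\Gg^\bullet)$. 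This yields the chain $\Phi^{\Qq^{\vee}\otimes\pi_Y^{*}\omega_Y[n]}_{Y\to X}\dashv\Phi^{\Qq}_{X\to Y}\dashv\Phi^{\Qq^{\vee}\otimes\pi_X^{*}\omega_X[m]}_{Y\to X}$.

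I expect the main technical point to be the clean identification $\pi_X^{!}\Oo_X\cong\pi_Y^{*}\omega_Y[n]$ and $\pi_Y^{!}\Oo_Y\cong\pi_X^{*}\omega_X[m]$ together with the compatibility $\pi^{!}(-)\cong L\pi^{*}(-)\otimes^{L}\pi^{!}\Oo$: this is exactly where one needs the Gorenstein hypothesis, so that the relative dualizing complexes are honest shifted line bundles rather than unbounded complexes, and where one must invoke the flat base-change behaviour of the dualizing complex along $X\times Y\to Y\to\Spec k$. The rest is bookkeeping with the projection formula and with the standard adjunction triple $L\pi^{*}\dashv R\pi_{*}\dashv\pi^{!}$, once perfectness of $\Qq$ has been used to make $(-)\otimes^{L}\Qq$ dualizable and to guarantee that all functors preserve bounded coherence.
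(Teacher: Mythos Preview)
The paper does not give its own proof of this proposition: it is quoted verbatim as \cite[Prop.~1.17]{HLS} and used as a black box (to obtain the adjoints in \eqref{eq same adjoints} and to feed into Bridgeland's criteria). So there is no in-paper argument to compare against. Your proof is correct and is precisely the standard one: decompose the integral functor as $R\pi_{Y,*}\circ\big((-)\otimes^{L}\Qq\big)\circ L\pi_X^{*}$, use perfectness of $\Qq$ to make the tensor functor self-adjoint via $\Qq^{\vee}$, and use Grothendieck duality for the projective Gorenstein morphisms $\pi_X,\pi_Y$ to identify $\pi^{!}\cong L\pi^{*}(-)\otimes^{L}\omega_{\pi}[\dim]$, with $\omega_{\pi_X}\cong\pi_Y^{*}\omega_Y$ and $\omega_{\pi_Y}\cong\pi_X^{*}\omega_X$. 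This is exactly the route taken in \cite{HLS}; the only point worth flagging is that the hypothesis ``flat over each factor'' in the statement as quoted here is not actually needed for the adjunction once $\Qq$ is perfect (it is used elsewhere in the paper to guarantee that the fibrewise restrictions $\Qq|_{\{x\}\times Y}$ and $\Qq|_{X\times\{y\}}$ behave well), so your remark that perfectness alone suffices for boundedness is accurate.
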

We first apply this to the integral functor $\Phi^\Rr_{\ol\Prym \to \ol\Prym'}$ from \eqref{eq Phi^R}. With the notation 
\[
\morph{D^b \left ( \ol{\Prym}_\beta(q') \right )}{D^b \left ( \ol{\Prym}_\beta(q) \right )}{\Ff^\bullet}{R r_{1,*}(r_2^*\Ff^\bullet \otimes \Rr^\vee[g]).}{}{H := \Phi^{\Rr^\vee[g]}_{\ol \Prym' \to \ol\Prym}},
\]
it follows from Proposition \ref{pr adjoints} and triviality of the dualising sheaves (Corollary \ref{co description of fine comp Pryms}, Proposition \ref{pr quotient prym}) that the integral funtor from \eqref{eq Phi^R} has the following left and right adjoints: 
\begin{equation*} %\label{eq same adjoints for H}
\adjointpair{\adjointpair{H}{\Phi^\Rr_{\ol\Prym \to \ol\Prym'}}}{H}.
\end{equation*}
We take a $\Gamma$-equivariant modification,
\[
H^\Gamma := \zeta_0^{-1} \circ \Phi_{\ol \Prym' \to \ol \Prym}^{\Rr^\vee[g]} \circ \zeta : D^b(\ol \Prym_\beta(q'), \Gamma) \to D^b(\ol \Prym_\beta(q), \Gamma) = D^b \left ( \ol{\Prym}_\beta(q)\right ).
\]
\begin{lemma} \label{le adjoints}
The functor $\Psi^{\Rr}_{\ol \Prym \to \ol \Prym'}$ defined in \eqref{eq Psi^R} has the adjoints 
\begin{equation*} %\label{eq same adjoints for H^Gamma}
\adjointpair{\adjointpair{H^{\Gamma}}{\Psi^\Rr_{\ol\Prym \to \ol\Prym'}}}{H^{\Gamma}}.
\end{equation*}
In particular, the left and right adjoints to $\Psi^{\Rr}_{\ol \Prym \to \ol \Prym'}$ coincide.
\end{lemma}
\begin{proof}
We make a $\Gamma$-equivariant modification to the proof of \cite[Propn. 1.17]{HLS}, which provides a detailed reference for the forthcoming calculation. Take $\Ff^\bullet \in D^b(\ol \Prym_\beta(q))$ and $\Gg^\bullet \in D^b(\ol \Prym_\beta(q'), \Gamma)$. Note that pullback and pushforward along the projections in \eqref{eq r projections} preserves derived categories of $\Gamma$-equivariant complexes. The equivariant derived category has the push-pull adjunction 
\[ 
\Hom_{D^b(\ol \Prym_\beta(q'), \Gamma)}(\Gg^\bullet, \Psi^{\Rr}_{\ol \Prym \to \ol \Prym'}\Ff^\bullet) 
\cong \Hom_{D^b(\ol \Prym_\beta(q) \times \ol \Prym_\beta(q'), \Gamma)}(r_2^{*}\Gg^\bullet, r_1^{*}\Ff^\bullet \otimes \Rr).
\]
The dualising functor $(\_)^{\vee}= \Hh om(\Oo, \_ ) $ acts on the equivariant derived category and satisfies the usual compatibilities. We therefore get  
\begin{align*}
\Hom_{D^b(\ol \Prym_\beta(q'), \Gamma)}(\Gg^\bullet, \Psi^{\Rr}_{\ol \Prym \to \ol \Prym'}\Ff^\bullet) 
& \cong 
\Hom_{D^b(\ol \Prym_\beta(q) \times \ol \Prym_\beta(q'), \Gamma)}((r_1^{*}\Ff^\bullet \otimes \Rr)^{\vee}, (r_2^{*}\Gg^\bullet)^{\vee}), \\
& \cong 
\Hom_{D^b(\ol \Prym_\beta(q) \times \ol \Prym_\beta(q'), \Gamma)}( r_1^{*}(\Ff^\bullet)^{\vee} , (\Rr^{\vee} \otimes (r_2^{*}\Gg^\bullet))^{\vee} )).
\end{align*}
Applying push-pull adjunction followed by Grothendieck--Verdier duality with the trivial dualising sheaf, we get 
\begin{align*}
\Hom_{D^b(\ol \Prym_\beta(q'), \Gamma)}(\Gg^{\bullet}, \Psi^{\Rr}_{\ol \Prym \to \ol \Prym'}\Ff^{\bullet})
& \cong 
\Hom_{D^b(\ol \Prym_\beta(q))}( (\Ff^\bullet)^{\vee}, Rr_{1,*}(\Rr^{\vee} \otimes r_2^{*} \Gg^\bullet)^{\vee} ), \\
& \cong 
\Hom_{D^b(\ol \Prym_\beta(q))}( Rr_{1,*} (\Rr^{\vee}[g] \otimes r_2^{*}\Gg^\bullet), \Ff^\bullet), 
\\
& = \Hom_{D^b(\ol \Prym_\beta(q))}( H^{\Gamma}(\Gg^\bullet), \Ff^\bullet)
\end{align*}
The adjoint on the other side is proven similarly and this completes the lemma.
\end{proof}

To prove the fully faithfulness of $\Psi^{\Rr}_{\ol \Prym \to \ol \Prym'}$ we shall apply a criterion of Bridgeland in terms of spanning classes.

\begin{theorem}[\cite{bridgeland}, Th. 2.3] \label{tm bridgeland full faithfullness}
Let $\Aa$ and $\Bb$ be triangulated categories, and let $F: \Aa \to \Bb$ be an
exact functor with a left and a right adjoint. Then $F$ is fully faithful if and only if there
exists a spanning class $\Omega$ for $\Aa$ such that for all elements $A_1, A_2 \in \Omega$ and all integers $i$, the map
\begin{equation} \label{hom test}
F: \Hom(A_1, A_2[i]) \longrightarrow \Hom(F(A_1),F(A_2)[i])
\end{equation}
is an isomorphism.
\end{theorem}

Recall that $\jmath: \ol \Prym_\beta(q) \hookrightarrow \ol \Jac_C(q)$ and $\jmath': \ol \Prym_\beta(q') \hookrightarrow \ol \Jac_C(q')$ denote the closed embeddings. Our strategy to show that $\Psi^\Rr_{\ol \Prym \to \ol \Prym'}$ satisfies the hypothesis of Theorem \ref{tm bridgeland full faithfullness} consists of exploiting the following relation. 

\begin{lemma} \label{le relation between integral functors} Consider the integral functors $\Phi^{\Pp}_{\ol \Jac \to \ol \Jac'}$ and $\Phi^{\Rr}_{\ol \Prym \to \ol \Prym'}$ constructed in \eqref{eq FM Arinkin} and \eqref{eq Phi^R} respectively. Then, there is a natural equivalence 
\[
\Phi^{\Rr}_{\ol \Prym \to \ol \Prym'} \simeq L \jmath'^{*} \circ \Phi^{\Pp}_{\ol \Jac \to \ol \Jac'} \circ R\jmath_{*}
\]
\end{lemma}
\begin{proof}
We record a composition of natural equivalences from base change, the projection formula and functoriality. With the Cartesian diagram
\begin{equation*}
\begin{tikzcd}[column sep = huge]
\ol \Prym_\beta(q) \times \ol \Jac_C(q') \arrow[r, "\jmath \times 1"] \arrow[d, "t_1"'] & \ol \Jac_C(q) \times \ol \Jac_C(q') \arrow[d, "p_1"] \\
\ol \Prym_\beta(q) \arrow[r, "\jmath"'] & \ol \Jac_C(q)
\end{tikzcd},
\end{equation*}
proper base change provides the isomorphism 
\[
L\jmath'^{*} \circ \Phi^{\Pp}_{\ol \Jac_C \to \ol \Jac_C'} \circ R\jmath_{*} (\Ff^{\bullet}) 
= L\jmath'^{*} \circ Rp_{2,*}( (p_1^{*} \circ R\jmath_{*}\Ff^{\bullet}) \otimes \Pp ) 
\cong L \jmath'^{*} \circ Rp_{2,*} ( (R (\jmath \times 1)_{*} \circ t_1^{*} \Ff^{\bullet}) \otimes \Pp ).
\]
Apply the projection formula to get
\[
L \jmath'^{*} \circ \Phi^{\Pp}_{\ol \Jac_C \to \ol \Jac_C'} \circ R\jmath_{*} (\Ff^{\bullet}) 
\cong L \jmath'^{*} \circ R(p_2 \circ (\jmath \times 1))_{*}( t_1^{*}\Ff^{\bullet} \otimes L(\jmath \times 1)^{*} \Pp ),
\]
followed by proper base change around the Cartesian diagram
\begin{equation*}
\begin{tikzcd}[column sep = huge]
\ol \Prym_\beta(q) \times \ol \Prym_\beta(q') \arrow[r, "1 \times \jmath'"] \arrow[d, "r_2"'] & \ol \Prym_\beta(q) \times \ol \Jac_C(q') \arrow[d, "p_2 \circ (\jmath \times 1)"] \\
\ol \Prym_\beta(q') \arrow[r, "\jmath'"'] & \ol \Jac_C(q')
\end{tikzcd},
\end{equation*}
which provides us with
\[
L \jmath'^{*} \circ \Phi^{\Pp}_{\ol \Jac_C \to \ol \Jac_C'} \circ R\jmath_{*} (\Ff^{\bullet}) 
\cong Rr_{2,*} \circ L(1 \times \jmath')^{*}( t_1^{*}\Ff^{\bullet} \otimes L(\jmath \times 1)^{*} \Pp ).
\]
Bringing pullback inside the tensor product and recalling that by Lemma \ref{derived pullback is pullback} we have $\Rr \cong L(\jmath \times \jmath')^{*}\Pp$, one obtains 
\[
L \jmath'^{*} \circ \Phi^{\Pp}_{\ol \Jac_C \to \ol \Jac_C'} \circ R\jmath_{*} (\Ff^{\bullet}) 
\cong Rr_{2,*}( L (1\times \jmath')^{*} \circ t_1^{*} \Ff^{\bullet} \otimes L (1\times \jmath')^{*} \circ L(\jmath \times 1)^{*} \Pp )
\cong Rr_{2,*} ( r_1^{*} \Ff^{\bullet} \otimes \Rr)
\]
which concludes the proof. 
\end{proof}

\begin{lemma}
\label{le Hom sets and points}
For all closed points $x, y \in \ol \Prym_\beta(q)$, and all $i \in \ZZ$, there exists an isomorphism 
\[
\Hom_{D^b(\ol \Prym_\beta(q'), \Gamma)}(\Psi^{\Rr}_{\ol \Prym \to \ol \Prym'}(\Oo_x), \Psi^{\Rr}_{\ol \Prym \to \ol \Prym'}(\Oo_{y})[i]) \cong \Hom_{D^b(\ol \Jac_C(q'))}(\Phi^{\Pp}_{\ol \Jac \to \ol \Jac'} \circ R\jmath_{*}(\Oo_x) , \Phi^{\Pp}_{\ol \Jac \to \ol \Jac'} \circ R\jmath_{*}(\Oo_{y})[i])
\]
\end{lemma}

\begin{proof}
If $x \neq y$ or $i \neq 0$, then one only has the zero morphism on both sides. If $x = y$ and $i=0$, our Hom-space calculations rely on the ambient spaces $\ol{\Prym}_\beta(p) \subset \ol{\Jac}_C(p)$ for both $p = q$ and $p = q'$. Since $\Phi^\Pp_{\ol\Jac \to \ol\Jac'}$ is an equivalence, note that 
\[
\Hom_{D^b(\ol \Jac_C(q'))}(\Phi^\Pp_{\ol\Jac \to \ol\Jac'} \circ R\jmath_{*} (\Oo_x) , \Phi^\Pp_{\ol\Jac \to \ol\Jac'} \circ R\jmath_{*}(\Oo_{x})) = k,
\]
which implies  
\[
\Hom_{D^b(\ol\Prym_\beta(q'))}( L\jmath^{*} \circ \Phi^\Pp_{\ol\Jac \to \ol\Jac'} \circ R\jmath_{*}(\Oo_x), L\jmath^{*} \circ \Phi^\Pp_{\ol\Jac \to \ol\Jac'} \circ R\jmath_{*}(\Oo_{x})) = k,
\]
as any non-scalar $L\jmath^* \eta$ would give $\eta$ non-scalar. From Lemma \ref{le relation between integral functors} it follows that 
\[
\Hom_{D^b(\ol\Prym_\beta(q'))}( \Phi^{\Rr}_{\ol \Prym \to \ol \Prym'}(\Oo_x) , \Phi^{\Rr}_{\ol \Prym \to \ol \Prym'}(\Oo_x) ) 
= k.
\]
Since all rescalings are equivariant, this restricts to the $\Gamma$-equivariant dervied category as  
\[
\Hom_{D^b(\ol\Prym_\beta(q'), \Gamma)}( \Psi^{\Rr}_{\ol \Prym \to \ol \Prym'}(\Oo_x) , \Psi^{\Rr}_{\ol \Prym \to \ol \Prym'}(\Oo_x) ) = k,
\]
which completes the proof. 
\end{proof}

We now collect and apply the tools studied in this section to prove our main theorem.

\begin{proof} 
(\textit{of Theorem \ref{tm fourier-mukai general case}}).
We verify the hypothesis of Theorem \ref{tm bridgeland equivalence}. The property involving adjoints is provided by Lemma \ref{le adjoints}, and the target category $D^b \left ( \ol{\Prym}_\beta(q'), \Gamma \right )$ is indecomposible by \cite[Lma. 4.2]{bridgeland&king&reid}. It remains to check that $\Psi^\Rr_{\ol \Prym \to \ol \Prym'}$ is fully faithful. Applying Theorem \ref{tm bridgeland full faithfullness}, we shall check that $\Psi^\Rr_{\ol\Prym \to \ol\Prym'}$ respects the homomorphisms between elements of a given spanning class in the derived category of $\ol\Prym_\beta(q)$. Since $\ol\Prym_\beta(q)$ is Gorenstein by Corollary \ref{co description of fine comp Pryms}, the skyscrapers $\Oo_x$ for all closed points $x \in \ol\Prym_\beta(q)$ form a spanning class \cite[Lma. 1.26]{HLS}. 
% The push-forward $R\jmath_*\Oo_x$ are naturally skyscrapers in $\ol\Jac_C(q)$, so they belong to a spanning class there also by \cite[Lma. 1.26]{HLS} and Theorem \ref{tm fine comp Jacobians}. 
This reduces the proof to the computation  
\begin{equation*}
\Hom_{D^b(\ol \Prym_\beta(q'), \Gamma)}(\Psi^{\Rr}_{\ol \Prym \to \ol \Prym'}(\Oo_x), \Psi^{\Rr}_{\ol \Prym \to \ol \Prym'}(\Oo_{y})[i]) 
\cong \Hom_{D^b(\ol \Prym_\beta(q))}(\Oo_x ,  \Oo_{y}[i])
\end{equation*}
for every pair of closed points $x, y \in \ol\Prym_\beta(q)$ and every $i \in \ZZ$. This follows from the composition 
\begin{align*}
\Hom_{D^b(\ol \Prym_\beta(q'), \Gamma)}(\Psi^{\Rr}_{\ol \Prym \to \ol \Prym'}(\Oo_x), & \Psi^{\Rr}_{\ol \Prym \to \ol \Prym'}(\Oo_{y})[i]) \\
& \cong \Hom_{D^b(\ol \Jac_C(q'))}(\Phi^{\Pp}_{\ol \Jac \to \ol \Jac'} \circ R\jmath_{*}(\Oo_x) , \Phi^{\Pp}_{\ol \Jac \to \ol \Jac'} \circ R\jmath_{*}(\Oo_{y})[i]), \\
& \cong \Hom_{D^b(\ol \Jac_C(q))}(R\jmath_{*}(\Oo_x) ,  R\jmath_{*}(\Oo_{y})[i]), \\
& \cong \Hom_{D^b(\ol \Prym_\beta(q))}(\Oo_x ,  \Oo_{y}[i]).
\end{align*}
The first isomorphism is Lemma \ref{le Hom sets and points}. The second isomorphism is due to the fact that $\Phi^{\Pp}_{\ol \Jac \to \ol \Jac'}$ defines a derived equivalence (Theorem \ref{thm:derivedequiv}), and so must satisfy the criterion of Theorem \ref{tm bridgeland full faithfullness}. The third isomorphism is push-pull adjunction. 
\end{proof}

%\begin{corollary}
%Over irreducible curves of type (\ref{dagger}) the Fourier-Mukai transform $\Phi^{\Qq}_{\ol \Prym \to \ol \Prym / \Gamma}$ induces an isomorphism 
%\begin{equation} \label{eq duality of SL PGL fibres}
%\ol \Prym_\beta^{\vee} \cong \ol \Prym_\beta / \Gamma.
%\end{equation}
%\end{corollary}

%\begin{proof}
%We observe a universal property on $\Qq$ as a family of sheaves and use this to induce a morphism on the moduli spaces at hand. Pick a closed point $[x] \in \ol \Prym_\beta / \Gamma$ and a lift $x \in \ol \Prym_\beta$. 
%\[
%\Qq |_{ \ol \Prym_\beta \times [x] } 
%\cong (1 \times \pi)^{*} \Qq | _{ \ol \Prym_\beta \times x }
%\cong (\jmath \times \jmath)^{*}\Pp |_{\ol \Prym_{\beta} \times [x] }
%\cong \jmath^{*}\Pp |_{\ol \Jac_C \times x} 
%\]
%By \cite[Theorem B]{arinkin} this slice is a stable rank one torsion-free sheaf on $\ol \Prym_\beta$ in the connected component of $\Oo_{\ol \Prym_\beta}$, thus giving rise to a morphism 
%\[
%\ol \Prym_\beta / \Gamma \to \ol \Prym_\beta^{\vee},  \quad [x] \mapsto \Qq |_{ \ol \Prym_\beta \times [x] } 
%\]
%which is just the restriction of Arinkin's morphism. The inverse transform provides the inverse morphism via a similar construction. 
%\end{proof}
%The difficulty in extending this corollary to curves which are reducible arises from $\ol \Prym_\beta(q)$ no longer being irreducible. The several components make (\ref{eq duality of SL PGL fibres}) sensitive to a choice of polarisation on $\ol \Prym_\beta(q)$, as this changes the moduli problem of $\ol \Prym_\beta(q)^{\vee}$. 

\section{The role of endoscopy} 
\label{sec FM and compactified Prym}
\label{FM to quotient}
\label{descent and endoscopy}

For an $n:1$ finite flat morphism $\beta : C \to X$, the non-endoscopy condition is defined by $\Gamma = \Jac_X[n]$ acting freely on $\ol\Prym_\beta(q')$. In this case all $\Gamma$-equivariant complexes descend to $\ol\Prym_\beta(q')/\Gamma$, so the essential image $\Ii$ of the flat morphism $\pi$,
\[
\pi^* : D^b(\ol\Prym_\beta(q')/\Gamma) \longrightarrow D^b (\ol\Prym_\beta(q'))
\] 
satisfies $\Ii \cong D^b (\ol\Prym_\beta(q'), \Gamma)$, thus providing an equivalence of categories 
\[
D^b(\ol \Prym_\beta(q') / \Gamma) \cong D^b(\ol \Prym_\beta(q'),\Gamma),
\]
where the inverse is $L\pi_*^{\Gamma}$, the $\Gamma$-equivariant pushforward functor,
as in \cite[Thm. 1.3, Cory. 2.6]{nevins}. This section provides a Fourier-Mukai interpretation to the composition of Theorem \ref{tm fourier-mukai general case} with descent to the quotient: 
\begin{equation*} 
\begin{tikzcd}[column sep = huge]
D^b(\ol \Prym_\beta(q)) \arrow[r, "\Phi^{\Rr}_{\ol \Prym \to \ol \Prym'}"]  
& D^b(\ol \Prym_\beta(q'),\Gamma) \arrow[r, "L\pi_*^{\Gamma}"] 
& D^b(\ol  \Prym_\beta(q') / \Gamma) 
\end{tikzcd}.
\end{equation*}

We begin by specifying the integral kernel. Recall $\Rr$ from the previous section, defined by the pullback $\Rr := (\jmath \times \jmath')^{*}\Pp$ of the Poincar\'e sheaf $\Pp$ along the embeddings $\jmath : \ol{\Prym}_\beta(q) \hookrightarrow \ol{\Jac}_C(q)$ and $\jmath' : \ol{\Prym}_\beta(q') \hookrightarrow \ol{\Jac}_C(q')$. Since $\Rr$ is $\Gamma$-equivariant (Proposition \ref{pr Gamma equivariant}), it follows that $\Rr$ descends to a sheaf $\Qq$ on $\ol \Prym_{\beta}(q) \times \ol \Prym_{\beta}(q') / \Gamma$, for instance by a (vacuous) application of a descent criterion of Nevins \cite[Thm. 1.2]{nevins}. By the flatness of the quotient map $\pi: \Prym_\beta(q') \to \Prym_\beta(q') / \Gamma$, it follows that $\Qq$ is flat over both factors.
 
\begin{corollary} \label{co descent for non-endoscopic curves}
Let $C$ be a curve as in \eqref{dagger} equipped with a finite flat $n:1$ $\beta : C \to X$ of non-endoscopic type. There exists a sheaf $\Qq$ over $\ol{\Prym}_\beta(q) \times \ol{\Prym}_\beta(q')/\Gamma$, flat over each factor, satisfying 
\[
\Rr \cong ( \id_{\ol{\Prym}} \times \pi)^{*} \Qq,
\]
which is to say that $\Rr$ descends along the quotient map $1\times \pi : \ol{\Prym}_\beta(q) \times \ol{\Prym}_\beta(q') \to \ol{\Prym}_\beta(q) \times \ol{\Prym}_\beta(q')/\Gamma$, and therefore 
\begin{equation} \label{eq relation of Poincare sheaves non-endoscopic}
(\jmath \times \id_{\ol{\Jac}})^* \Pp \cong (\id_{\ol{\Prym}} \times \chi)^* \Qq
\end{equation}
holds.\end{corollary}
With the projections onto the first and second factors
\begin{equation}
\xymatrix{
	&  \ol{\Prym}_\beta(q) \times \ol{\Prym}_\beta(q')/\Gamma  \ar[ld]_{\q_1} \ar[rd]^{\q_2} &
	\\
	\ol{\Prym}_\beta(q) & & \ol{\Prym}_\beta(q')/\Gamma,
}
\end{equation}
we define the integral functor with kernel $\Qq$, 
\begin{equation} \label{eq Phi^Q}
\morph{D^b \left ( \ol{\Prym}_\beta(q) \right )}{D^b \left ( \ol{\Prym}_\beta(q')/\Gamma \right )}{\Ff^\bullet}{R \q_{2,*}(\q_1^*\Ff^\bullet \otimes \Qq),}{}{\Phi^\Qq_{\ol\Prym \to \ol\Prym'/\Gamma}}
\end{equation}
which is well-defined as $\Qq$ is a coherent sheaf flat over $\ol{\Prym}_\beta(q)$. By an application of flat base change this fits into the commutative triangle
\begin{equation} \label{eq triangle of transforms}
\begin{tikzcd}[column sep = huge]
D^b(\ol \Prym_\beta(q)) \arrow[r, "\Phi^{\Rr}_{\ol \Prym \to \ol \Prym'}"] \arrow[dr, "\Phi^{\Qq}_{\ol \Prym \to \ol \Prym' / \Gamma}"'] & D^b(\ol \Prym_\beta(q'),\Gamma) \arrow[bend right=30,swap]{d}{\pi_*^\Gamma}
 \\
 & D^b(\ol  \Prym_\beta(q') / \Gamma) \arrow[bend right=30,swap]{u}{\pi^*}
\end{tikzcd},
\end{equation}
and it follows that $\Phi^{\Qq}_{\ol \Prym \to \ol \Prym' / \Gamma}$ is a composition of derived equivalences. The following is therefore a consequence of Theorem \ref{tm fourier-mukai general case}. 

\begin{theorem} \label{tm main theorem}
Suppose that $C$ is a curve as in \eqref{dagger} equipped with a finite flat $n:1$ morphism $\beta : C \to X$ of non-endoscopic type. Consider two general polarizations $q$ and $q'$ on $C$. Then, the integral functor 
\begin{equation*} 
 \Phi^\Qq_{\ol\Prym \to \ol\Prym'/\Gamma} : D^b \left ( \ol{\Prym}_\beta(q) \right )\to D^b \left ( \ol{\Prym}_\beta(q')/\Gamma \right ),
\end{equation*}
as defined in \eqref{eq Phi^Q}, is an equivalence of categories. 
\end{theorem}

To complete the circle of ideas in this paper we relate $\Phi^\Qq_{\ol\Prym \to \ol\Prym'/\Gamma}$ back to the Fourier--Mukai transform on compactified Jacobian varieties from Theorem \ref{thm:derivedequiv}.

\begin{lemma} \label{le key properties of functors}
Consider the Fourier--Mukai transforms $\Phi^\Pp_{\ol\Jac \to \ol\Jac'}$ and $\Phi^\Qq_{\ol\Prym \to \ol\Prym'/\Gamma}$ constructed in \eqref{eq FM Arinkin} and \eqref{eq Phi^Q}, and recall the smooth morphism $\chi : \ol\Jac_C(q') \to \ol\Prym_\beta(q')/\Gamma$ defined in \eqref{eq chi} and the closed embedding $\jmath: \ol\Prym_\beta(q) \hookrightarrow \ol\Jac_C(q)$. Then,
    \begin{equation*} %\label{eq FM relation 1}
\Phi^\Pp_{\ol\Jac \to \ol\Jac'} \circ R \jmath_* \cong L\chi^* \circ \Phi^\Qq_{\ol\Prym \to \ol\Prym'/\Gamma}.
\end{equation*}
\end{lemma}

\begin{proof}
Consider the following Cartesian diagram,
\begin{equation*}
\begin{tikzcd}[column sep = huge]
\ol{\Prym}_\beta(q) \times \overline{\Jac}_C(q') \arrow[r, "\id_{\ol{\Prym}}\times \chi"] \arrow[d, "\p_2\circ \left(\jmath\times \id_{\overline{\Jac}}\right)"'] & \ol{\Prym}_\beta(q) \times \quotient{\ol{\Prym}_\beta(q')}{\Gamma} \arrow[d, "\q_2"]\\
\overline{\Jac}_C(q') \arrow[r, "\chi"'] & \quotient{\ol{\Prym}_\beta(q')}{\Gamma}
\end{tikzcd}.
\end{equation*}

Let $\Ff^\bullet \in D^b(\ol{\Prym}_\beta(q))$. Then, performing base change, recalling the relation between $\Qq$ and $\Pp$ in \eqref{eq relation of Poincare sheaves non-endoscopic} and applying projection formula, one gets the identification 
\[
\left(L\chi^\ast \circ \Phi^\Qq_{\ol\Prym \to \ol\Prym'/\Gamma} \right)(\Ff^\bullet) \cong R\p_{2,\ast}\left(R(\jmath\times 1_{\overline{\Jac}_C})_\ast\left(L(\q_1\circ (\id_{\ol{\Prym}}\times \chi))^\ast \Ff^\bullet \right)\otimes^L \Pp\right)
\] 
Now, considering a base change for the Cartesian diagram
\begin{equation*}
\begin{tikzcd}[column sep = huge]
\ol{\Prym}_\beta(q) \times \overline{\Jac}_C(q') \arrow[r, "\q_1\circ \left(\id_{\ol{\Prym}}\times \chi\right)"] \arrow[d, "\jmath\times \id_{\overline{\Jac}_C}"'] & \ol{\Prym}_\beta(q) \arrow[d, "\jmath"]\\
\overline{\Jac}_C(q) \times \overline{\Jac}_C(q') \arrow[r, "\p_1"'] & \overline{\Jac}_C(q)
\end{tikzcd},
\end{equation*}
the expression above can be rewritten,
\begin{equation*}
	\left(L\chi^\ast \circ \Phi^\Qq_{\ol\Prym \to \ol\Prym'/\Gamma}\right)(\Ff^\bullet) \cong R\p_{2,\ast}\left(\p_1^\ast (R\jmath_\ast \Ff^\bullet)\otimes^L \Pp\right),
\end{equation*}
and the proof follows.
\end{proof}

Finally we comment on the key difference between the endoscopic and non-endoscopic cases. With endoscopic $\beta : C \to X$, $\Gamma$ does not act freely on $\ol \Prym_{\beta}(q)$, so the quotient morphism $\pi: \ol \Prym_\beta(q) \to \ol \Prym_\beta(q) / \Gamma$ is not flat. One may still construct the sheaf $\Qq$ on $\ol \Prym_\beta(q) \times \ol \Prym_\beta(q') / \Gamma$ via the descent criterion of Nevins, however $\Qq$ will fail to be flat over both factors. Proposition \ref{pr adjoints} no longer applies and adjointness of the integral functor fails. Therefore in the endoscopic case the corresponding integral functor would fail to be an equivalence of categories.

One still gets the Diagram \eqref{eq triangle of transforms} but the push-pull functor pair are no longer equivalences. Instead the subcategories $\Kk := \ker(\pi_*^{\Gamma})$ and $\Ii$ the essential image of $\pi^{*}$ form a semiorthogonal decomposition of $D^b(\ol \Prym_\beta(q'),\Gamma)$,
\[
D^b(\ol \Prym_\beta(q'),\Gamma) = \langle \Kk, \Ii \rangle,
\]
where the component $\Kk$ characterises the sheaves that do not descend to the quotient (\cite[Propn. 2.8]{nevins}). This describes the situation of {\it fractional branes}, as studied by Frenkel and Witten in \cite{frenkel&witten} over the endoscopic locus of the Hitchin fibration.

%%%%%%%%%%%%%%%%%%%%%%%%%%%%%%%%%%%%%%%%%%%%%%%%%%%%%%%%%%%%%%%%%%%%%%%%%%%%

\newpage

\end{document}